\def\h{{\mathcal H^{d_2-1}}}
\def\C{{\mathcal C}}
\def\A{\mathcal{A}}
\def\E{\mathcal{E}}
\def\HH{\mathcal{H}}
\def\K{\mathcal{K}}
\def\ds{\displaystyle}
\def\eps{{\varepsilon}}
\def\O{\Omega}
\def\N{\mathbb{N}}
\def\R{\mathbb{R}}
\newcommand{\be}{\begin{equation}}
\newcommand{\ee}{\end{equation}}
\newcommand{\bib}[4]{\bibitem{#1}{\sc#2: }{\it#3. }{#4.}}
\numberwithin{equation}{section} \theoremstyle{plain}
\newtheorem{teo}{Theorem}[section]
\newtheorem{lemma}[teo]{Lemma}
\newtheorem{prop}[teo]{Proposition}
\newtheorem{conj}[teo]{Conjecture}
\theoremstyle{remark}
\newtheorem{oss}[teo]{Remark}
\theoremstyle{figure}
\title{Optimization problems involving the first Dirichlet eigenvalue and the torsional rigidity}
\author{Michiel van den Berg, Giuseppe Buttazzo,  Bozhidar Velichkov}
\date{28 October 2014}
\begin{document}
\maketitle

\begin{abstract}
We present some open problems and obtain some partial results for spectral optimization problems involving measure, torsional rigidity and first Dirichlet eigenvalue.
\end{abstract}

\textbf{Keywords:} Torsional rigidity, Dirichlet eigenvalues, spectral optimization.

\textbf{2010 Mathematics Subject Classification:} 49J45, 49R05, 35P15, 47A75, 35J25

\section{Introduction}\label{sintro}

A shape optimization problem can be written in the very general form
$$\min\big\{F(\O)\ :\ \O\in\A\big\},$$
where $\A$ is a class of admissible domains and $F$ is a cost functional defined on $\A$. We consider in the present paper the case where the cost functional $F$ is related to the solution of an elliptic equation and involves the spectrum of the related elliptic operator. We speak in this case of {\it spectral optimization problems}. Shape optimization problems of spectral type have been widely considered in the literature; we mention for instance the papers \cite{bulbk}, \cite{bubuveihp}, \cite{bbf99}, \cite{budm91}, \cite{budm93}, \cite{buvejfa}, \cite{buve12}, \cite{pm10}, and we refer to the books \cite{bubu05}, \cite{hen06}, \cite{hepi05}, and to the survey papers \cite{ash99}, \cite{but10}, \cite{hen03}, where the reader can find a complete list of references and details.

In the present paper we restrict ourselves for simplicity to the Laplace operator $-\Delta$ with Dirichlet boundary conditions. Furthermore we shall assume that the admissible domains $\O$ are a priori contained in a given {\it bounded} domain $D\subset\R^d$. This assumption greatly simplifies several existence results that otherwise would require additional considerations in terms of concentration-compactness arguments \cite{bulbk}, \cite{tesi}.

The most natural constraint to consider on the class of admissible domains is an inequality on their Lebesgue measure. Our admissible class $\A$ is then
$$\A=\big\{\O\subset D\ :\ |\O|\le1\big\}.$$
Other kinds of constraints are also possible, but we concentrate here to the one above, referring the reader interested in possible variants to the books and papers quoted above.

The following two classes of cost functionals are the main ones considered in the literature.

\medskip{\it Integral functionals.} Given a right-hand side $f\in L^2(D)$, for every $\O\in\A$ let $u_\O$ be the unique solution of the elliptic PDE
$$-\Delta u=f\hbox{ in }\O,\qquad u\in H^1_0(\O).$$
The integral cost functionals are of the form
$$F(\O)=\int_\O j\big(x,u_\O(x),\nabla u_\O(x)\big)\,dx,$$
where $j$ is a suitable integrand that we assume convex in the gradient variable. We also assume that $j$ is bounded from below by
$$j(x,s,z)\ge-a(x)-c|s|^2,$$
with $a\in L^1(D)$ and $c$ smaller than the first Dirichlet eigenvalue of the Laplace operator $-\Delta$ in $D$. For instance, the energy $\E_f(\O)$ defined by
$$\E_f(\O)=\inf\left\{\int_D\Big(\frac12|\nabla u|^2-f(x)u\Big)\,dx\ :\ u\in H^1_0(\O)\right\},$$
belongs to this class since, integrating by parts its Euler-Lagrange equation, we have that
$$\E_f(\O)=-\frac12\int_D f(x)u_\O\,dx,$$
which corresponds to the integral functional above with
$$j(x,s,z)=-\frac12 f(x)s.$$
The case $f=1$ is particularly interesting for our purposes. We denote by $w_\O$ the {\it torsion function}, that is the solution of the PDE
$$-\Delta u=1\hbox{ in }\O,\qquad u\in H^1_0(\O),$$
and by the {\it torsional rigidity} $T(\O)$ the $L_1$ norm of $w_{\O}$,
$$T(\O)=\int_\O w_\O\,dx=-2\E_1(\O).$$

\medskip{\it Spectral functionals.} For every admissible domain $\O\in\A$ we consider the spectrum $\Lambda(\O)$ of the Laplace operator $-\Delta$ on $H^1_0(\O)$. Since $\O$ has a finite measure, the operator $-\Delta$ has a compact resolvent and so its spectrum $\Lambda(\O)$ is discrete:
$$\Lambda(\O)=\big(\lambda_1(\O),\lambda_2(\O),\dots\big),$$
where $\lambda_k(\O)$ are the eigenvalues counted with their multiplicity. The spectral cost functionals we may consider are of the form
$$F(\O)=\Phi\big(\Lambda(\O)\big),$$
for a suitable function $\Phi:\R^\N\to\overline{\R}$. For instance, taking $\Phi(\Lambda)=\lambda_k(\O)$ we obtain
$$F(\O)=\lambda_k(\O).$$

\medskip We take the torsional rigidity $T(\O)$ and the first eigenvalue $\lambda_1(\O)$ as prototypes of the two classes above and we concentrate our attention on cost functionals that depend on both of them. We note that, by the maximum principle, when $\O$ increases $T(\O)$ increases, while $\lambda_1(\O)$ decreases.

\section{Statement of the problem}\label{sposit}

The optimization problems we want to consider are of the form
\be\label{optpb} \min\left\{\Phi\big(\lambda_1(\O),T(\O)\big)\ :\
\O\subset D,\ |\O|\le1\right\},
\ee
where we have normalized the constraint on the Lebesgue measure of $\O$, and where $\Phi$ is a given continuous (or lower semi-continuous) and non-negative function. In the rest of this paper we often take for simplicity $D=\R^d$, even if most of the results are valid in the general case. For instance, taking $\Phi(a,b)=ka+b$ with $k$ a fixed positive constant, the quantity we aim to minimize becomes
$$k\lambda_1(\O)+T(\O)\qquad\hbox{with $\O\subset D,$ and }|\O|\le1.$$

\begin{oss}\label{exbdm}
If the function $\Phi(a,b)$ is increasing with respect to $a$ and decreasing with respect to $b$, then the cost functional
$$F(\O)=\Phi\big(\lambda_1(\O),T(\O)\big)$$
turns out to be decreasing with respect to the set inclusion. Since both the torsional rigidity and the first eigenvalue are $\gamma$-continuous functionals and the function $\Phi$ is assumed
lower semi-continuous, we can apply the existence result of \cite{budm93}, which provides the existence of an optimal domain.
\end{oss}

In general, if the function $\Phi$ does not verify the monotonicity property of Remark \ref{exbdm}, then the existence of an optimal domain is an open problem, and the aim of this paper is to discuss this issue. For simplicity of the presentation we limit ourselves to the two-dimensional case $d=2$. The case of general $d$ does not present particular difficulties but requires the use of several $d-$ dependent exponents.

\begin{oss}\label{facts}
The following facts are well known.

\begin{itemize}
\item[i)] If $B$ is a disk in $\R^2$ we have
$$T(B)=\frac{1}{8\pi}|B|^2.$$

\item[ii)] If $j_{0,1}\approx2.405$ is the first positive zero of the Bessel functions $J_0(x)$ and $B$ is a disk of $\R^2$ we have
$$\lambda_1(B)=\frac{\pi}{|B|}j^2_{0,1}.$$

\item[iii)] The torsional rigidity $T(\O)$ scales as
$$T(t\O)=t^4T(\O),\qquad\forall t>0.$$

\item[iv)] The first eigenvalue $\lambda_1(\O)$ scales as
$$\lambda_1(t\O)=t^{-2}\lambda_1(\O),\qquad\forall t>0.$$

\item[v)] For every domain $\O$ of $\R^2$ and any disk $B$ we have
$$|\O|^{-2}T(\O)\le|B|^{-2}T(B)=\frac{1}{8\pi}.$$

\item[vi)] For every domain $\O$ of $\R^2$ and any disk $B$ we have (Faber-Krahn inequality)
$$|\O|\lambda_1(\O)\ge|B|\lambda_1(B)=\pi j^2_{0,1}.$$

\item[vii)] A more delicate inequality is the so-called Kohler-Jobin inequality (see \cite{kojo}, \cite{brasco}): for any domain $\O$ of $\R^2$ and any disk $B$ we have
$$\lambda^2_1(\O)T(\O)\ge\lambda^2_1(B)T(B)=\frac{\pi}{8}j^4_{0,1}.$$
This had been previously conjectured by G. P\'{o}lya and G.Szeg\"{o} \cite{PS}.
\end{itemize}
\end{oss}

We recall the following inequality, well known for planar regions (Section 5.4 in \cite{PS}), between torsional rigidity and first eigenvalue.

\begin{prop}\label{ineq1}
For every domain $\O\subset\R^d$ we have
$$\lambda_1(\O)T(\O)\le|\O|.$$
\end{prop}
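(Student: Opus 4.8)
The plan is to use the variational characterizations of both quantities together with a single well-chosen test function, namely the torsion function $w_\O$ itself. Recall that $w_\O$ minimizes $\int_\O\big(\frac12|\nabla u|^2-u\big)\,dx$ over $H^1_0(\O)$, so that $\int_\O|\nabla w_\O|^2\,dx=\int_\O w_\O\,dx=T(\O)$, and that $\lambda_1(\O)$ is the minimum of the Rayleigh quotient $\int_\O|\nabla u|^2\,dx\big/\int_\O u^2\,dx$ over $u\in H^1_0(\O)\setminus\{0\}$. Plugging $u=w_\O$ into the Rayleigh quotient gives
$$\lambda_1(\O)\le\frac{\int_\O|\nabla w_\O|^2\,dx}{\int_\O w_\O^2\,dx}=\frac{T(\O)}{\int_\O w_\O^2\,dx},$$
so it suffices to prove $\int_\O w_\O^2\,dx\ge T(\O)^2/|\O|$.

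This last inequality is exactly Cauchy--Schwarz:
$$T(\O)^2=\Big(\int_\O w_\O\,dx\Big)^2=\Big(\int_\O 1\cdot w_\O\,dx\Big)^2\le|\O|\int_\O w_\O^2\,dx.$$
Combining the two displays yields $\lambda_1(\O)\le T(\O)\cdot|\O|/T(\O)^2=|\O|/T(\O)$, i.e. $\lambda_1(\O)T(\O)\le|\O|$, which is the claim. Note this argument works verbatim in every dimension $d$, with no change.

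I do not expect any serious obstacle here: both ingredients are elementary once one thinks of testing the Rayleigh quotient with $w_\O$. The only point requiring a word of care is the identity $\int_\O|\nabla w_\O|^2\,dx=T(\O)$, which follows by multiplying $-\Delta w_\O=1$ by $w_\O$ and integrating by parts (legitimate since $w_\O\in H^1_0(\O)$); and the fact that $w_\O\not\equiv 0$ when $|\O|>0$, so the Rayleigh quotient is well defined. If one prefers to avoid even mentioning regularity of $w_\O$, one can instead argue through the energy $\E_1$: for the first eigenfunction $u_1$ normalized in $L^2$, $\E_1(\O)\le\frac12\lambda_1(\O)\big(\int_\O u_1\big)^2-\int_\O u_1\,\cdot\,$(optimize the scaling)$\,=-\frac{1}{2\lambda_1(\O)}\big(\int_\O u_1\big)^2$, and then $T(\O)=-2\E_1(\O)\ge\lambda_1(\O)^{-1}\big(\int_\O u_1\big)^2\ge\lambda_1(\O)^{-1}\cdot(\text{something})$; but the direct Rayleigh-quotient route above is cleaner and I would present that one.
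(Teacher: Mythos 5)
Your proof is correct and essentially identical to the paper's: both test the Rayleigh quotient with the torsion function $w_\O$, use $\int_\O|\nabla w_\O|^2\,dx=T(\O)$ from integration by parts, and invoke Cauchy--Schwarz (the paper calls it H\"older) to get $\int_\O w_\O^2\,dx\ge T(\O)^2/|\O|$. Nothing to add.
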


\begin{proof}
By definition, $\lambda_1(\O)$ is the infimum of the Rayleigh quotient
$$\int_\O|\nabla u|^2\,dx\bigg/\int_\O u^2\,dx\qquad\hbox{over all }u\in H^1_0(\O),\ u\ne0.$$
Taking as $u$ the torsion function $w_\O$, we have
$$\lambda_1(\O)\le\int_\O|\nabla w_\O|^2\,dx\bigg/\int_\O w_\O^2\,dx.$$
Since $-\Delta w_\O=1$, an integration by parts gives
$$\int_\O|\nabla w_\O|^2\,dx=\int_\O w_\O\,dx=T(\O),$$
while the H\"older inequality gives
$$\int_\O w^2_\O\,dx\ge\frac{1}{|\O|}\left(\int_\O w_\O\,dx\right)^2=\frac{1}{|\O|}\big(T(\O)\big)^2.$$
Summarizing, we have
$$\lambda_1(\O)\le\frac{|\O|}{T(\O)}$$
as required.
\end{proof}

\begin{oss}
The infimum of $\lambda_1(\O)T(\O)$ over open sets $\O$ of prescribed measure is zero. To see this, let $\O_n$ be the disjoint union of one ball of volume $1/n$ and $n(n-1)$ balls of volume $1/n^2$. Then the radius $R_n$ of the ball of volume $1/n$ is $(n\omega_d)^{-1/d}$ while the radius $r_n$ of the balls of volume $1/n^2$ is $(n^2\omega_d)^{-1/d}$, so that $|\O_n|=1$,
$$\lambda_1(\O_n)=\lambda_1(B_{R_n})=\frac{1}{R_n^2}\lambda_1(B_1)=(n\omega_d)^{2/d}\lambda_1(B_1),$$
and
\[\begin{split}
T(\O_n)&=T(B_{R_n})+n(n-1)T(B_{r_n})=T(B_1)\big(R_n^{d+2}+n(n-1)r_n^{d+2}\big)\\
&=T(B_1)\omega_d^{-1-2/d}\big(n^{-1-2/d}+(n-1)n^{-1-4/d}\big).
\end{split}\]
Therefore
$$\lambda_1(\O_n)T(\O_n)=\frac{\lambda_1(B_1)T(B_1)}{\omega_d}\frac{n^{2/d}+n-1}{n^{1+2/d}},$$
which vanishes as $n\to\infty$.
\end{oss}

In the next section we investigate the inequality of Proposition \ref{ineq1}.

\section{A sharp inequality between torsion and first eigenvalue}\label{sineq}

We define the constant
$$\K_d=\sup\left\{\frac{\lambda_1(\O)T(\O)}{|\O|}\ :\ \O\hbox{ open in }\R^d,\ |\O|<\infty\right\}.$$
We have seen in Proposition \ref{ineq1} that $\K_d\le1$. The question is if the constant $1$ can be improved.

Consider a ball $B$; performing the shape derivative as in \cite{hepi05}, keeping the volume of the perturbed shapes constant, we obtain that for every field $V(x)$
$$\partial[\lambda_1(B)T(B)](V)=T(B)\partial[\lambda_1(B)](V)+\lambda_1(B)\partial[T(B)](V)=C_B\int_{\partial B}V\cdot n\,d\HH^{d-1}$$
for a suitable constant $C_B$. Since the volume of the perturbed shapes is constant, we have
$$\int_{\partial B}V\cdot n\,d\HH^{d-1}=0,$$
where $\HH^{d-1}$ denotes $(d-1)$-dimensional Hausdorff measure. This shows that balls are stationary for the functional
$$F(\O)=\frac{\lambda_1(\O)T(\O)}{|\O|}.$$
Below we will show, by considering rectangles, that balls are not optimal. To do so we shall obtain a lower bound for the torsional rigidity of a rectangle.

\begin{prop}\label{rectangle}
In a rectangle $R_{a,b}=(-b/2,b/2)\times(-a/2,a/2)$ with $a\le b$ we have
$$T(R_{a,b})\ge\frac{a^3b}{12}-\frac{11a^4}{180}.$$
\end{prop}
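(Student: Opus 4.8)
The plan is to compute $T(R_{a,b})$ via the known Fourier-series representation of the torsion function on a rectangle and then truncate the series to obtain the stated lower bound. The torsion function $w$ solving $-\Delta w=1$ in $R_{a,b}$ with zero boundary data is even in both variables, so I would expand it in the eigenfunctions $\cos\big((2k+1)\pi y/a\big)$ of the interval $(-a/2,a/2)$ in the short direction. Writing $w(x,y)=\sum_{k\ge 0} f_k(x)\cos\big((2k+1)\pi y/a\big)$ and matching with the expansion $1=\sum_{k\ge 0} c_k\cos\big((2k+1)\pi y/a\big)$, where $c_k=\frac{4(-1)^k}{(2k+1)\pi}$, one gets an elementary ODE for each $f_k$, whose solution is the standard
$$f_k(x)=\frac{c_k a^2}{(2k+1)^2\pi^2}\left(1-\frac{\cosh\big((2k+1)\pi x/a\big)}{\cosh\big((2k+1)\pi b/(2a)\big)}\right).$$
Integrating $w$ over $R_{a,b}$ and using $\int_{-a/2}^{a/2}\cos\big((2k+1)\pi y/a\big)\,dy=\frac{a(-1)^k \cdot 2}{(2k+1)\pi}$ together with $\int_{-b/2}^{b/2}f_k$, this yields the closed form
$$T(R_{a,b})=\frac{a^3 b}{12}-\frac{16 a^4}{\pi^5}\sum_{k\ge 0}\frac{1}{(2k+1)^5}\tanh\!\left(\frac{(2k+1)\pi b}{2a}\right),$$
where the leading term $a^3b/12$ comes from $\sum_k \frac{8}{(2k+1)^4\pi^4}=\frac{1}{12}$ after integrating the constant-in-$x$ part across the long side.

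Next I would bound the correction term from above. Since $0\le\tanh t\le 1$, the sum is at most $\sum_{k\ge0}(2k+1)^{-5}=\frac{31}{32}\zeta(5)$, so
$$T(R_{a,b})\ge \frac{a^3b}{12}-\frac{16 a^4}{\pi^5}\cdot\frac{31}{32}\zeta(5)=\frac{a^3b}{12}-\frac{31\,\zeta(5)}{2\pi^5}\,a^4.$$
It remains to check the numerical inequality $\frac{31\,\zeta(5)}{2\pi^5}\le\frac{11}{180}$; numerically the left side is about $0.05263$ while $\frac{11}{180}\approx0.06111$, so the bound holds with room to spare. (One could also avoid $\zeta(5)$ entirely: keep the $k=0$ term with its genuine $\tanh$ factor, which is helpful when $b$ is not much larger than $a$, and crudely bound the tail $k\ge1$ by $\sum_{k\ge1}(2k+1)^{-5}$; either route closes the gap.)

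The only real subtlety is justifying the termwise integration and the validity of the Fourier representation of $w$ — but this is entirely standard since the series for $w$ converges in $H^1_0(R_{a,b})$ and in particular in $L^1$, so integrating term by term is legitimate. Thus the "hard part" is essentially bookkeeping: getting the constant $\frac{1}{12}$ in the leading term correct and confirming that the universal constant multiplying $a^4$ is below $\frac{11}{180}$. No genuine obstacle is expected; the inequality is deliberately stated with a non-optimal constant precisely so that the crude bound $\tanh\le 1$ suffices.
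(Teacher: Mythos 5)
Your proof is correct, but it takes a genuinely different route from the paper's argument for Proposition~\ref{rectangle}. The paper plugs the explicit trial function
$u(x,y)=\tfrac{a^2-4y^2}{8}\,\theta(x)$, with $\theta$ a piecewise-linear cutoff in the long direction, into the variational characterization of the energy $\E_1(R_{a,b})$, and then uses $T(R_{a,b})=-2\E_1(R_{a,b})$. This is entirely elementary: a couple of one-dimensional integrals and the monotone formula $T=-2\E_1$. Your proof instead invokes the exact Fourier-series solution of the torsion problem on the rectangle,
\[
T(R_{a,b})=\frac{a^3 b}{12}-\frac{16 a^4}{\pi^5}\sum_{k\ge 0}\frac{1}{(2k+1)^5}\tanh\!\Big(\frac{(2k+1)\pi b}{2a}\Big),
\]
and then bounds $\tanh\le 1$. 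Your computation is right (the coefficients $c_k$, the $f_k$, the use of $\sum(2k+1)^{-4}=\pi^4/96$, and the numerics $\tfrac{31\zeta(5)}{2\pi^5}\approx 0.0525<\tfrac{11}{180}\approx 0.0611$ all check out), and it actually yields the sharper constant $\tfrac{31\zeta(5)}{2\pi^5}$ in place of $\tfrac{11}{180}$. That sharper constant is precisely what the paper obtains later, via heat-content asymptotics, in Theorem~\ref{the3}. So what you have written is closer in spirit to the proof of Theorem~\ref{the3} than to the paper's proof of Proposition~\ref{rectangle}; the trade-off is that the paper's test-function argument for the proposition requires nothing beyond Fubini and a line of algebra, whereas your route leans on the classical closed-form series (and the justification of termwise integration, which, as you note, is standard).
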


\begin{proof}
Let us estimate the energy
$$\E_1(R_{a,b})=\inf\left\{\int_{R_{a,b}}\left(\frac12|\nabla u|^2-u\right)\,dx\,dy\ :\ u\in H^1_0(R_{a,b})\right\}$$
by taking the function
$$u(x,y)=\frac{a^2-4y^2}{8}\theta(x),$$
where $\theta(x)$ is defined by
$$\theta(x)=\begin{cases}
1&\hbox{,if }|x|\le(b-a)/2\\
(b-2|x|)/a&\hbox{,otherwise.}
\end{cases}$$
We have
$$|\nabla u|^2=\left(\frac{a^2-4y^2}{8}\right)^2|\theta'(x)|^2+y^2|\theta(x)|^2,$$
so that
\[\begin{split}
\E_1(R_{a,b})&\le2\int_0^{a/2}\left(\frac{a^2-4y^2}{8}\right)^2\,dy\int_0^{b/2}|\theta'(x)|^2\,dx+2\int_0^{a/2}y^2\,dy\int_0^{b/2}|\theta(x)|^2\,dx\\
&\qquad-4\int_0^{a/2}\frac{a^2-4y^2}{8}\,dy\int_0^{b/2}\theta(x)\,dx\\
&=\frac{a^4}{60}+\frac{a^3}{12}\left(\frac{b-a}{2}+\frac{a}{6}\right)-\frac{a^3}{6}\left(\frac{b-a}{2}+\frac{a}{4}\right)\\
&=-\frac{a^3b}{24}+\frac{11a^4}{360}.
\end{split}\]
The desired inequality follows since $T(R_{a,b})=-2\E_1(R_{a,b})$.
\end{proof}

In $d$-dimensions we have the following.

\begin{prop}\label{prop}
If $\O_\eps=\omega\times(-\eps/2,\eps,2)$, where $\omega$ is a convex set in $\R^{d-1}$ with $|\omega|<\infty$, then
$$T(\O_\eps)=\frac{\eps^3}{12}|\omega|+O(\eps^4),\qquad\epsilon\downarrow 0.$$
\end{prop}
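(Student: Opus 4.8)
The plan is to prove the two-sided estimate by separately establishing an upper and a lower bound on $T(\O_\eps)$, both of which agree to leading order $\eps^3|\omega|/12$. The strategy mirrors the proof of Proposition \ref{rectangle}: use the variational characterization $T(\O_\eps)=-2\E_1(\O_\eps)$, so that an upper bound on $\E_1$ (via a cleverly chosen test function) yields a lower bound on $T$, and a lower bound on $\E_1$ yields an upper bound on $T$.

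First I would treat the lower bound on $T(\O_\eps)$. Writing points of $\O_\eps$ as $(x,y)$ with $x\in\omega\subset\R^{d-1}$ and $y\in(-\eps/2,\eps/2)$, I would plug into the energy the test function $u(x,y)=\frac{\eps^2-4y^2}{8}\,\theta_\eps(x)$, where $\theta_\eps$ is a cutoff that equals $1$ on the inner parallel set $\{x\in\omega:\dist(x,\partial\omega)>\eps\}$ and decays linearly to $0$ on the $\eps$-collar near $\partial\omega$ (this is where convexity of $\omega$ is convenient, as the inner parallel sets are well-behaved and the collar has measure $O(\eps)$ by the boundedness of $|\omega|$ and standard properties of convex sets). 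The leading term comes from the $y$-profile: $\int_{-\eps/2}^{\eps/2}\big(\tfrac{\eps^2-4y^2}{8}\big)^2dy$, $\int_{-\eps/2}^{\eps/2}y^2\,dy$ and $\int_{-\eps/2}^{\eps/2}\tfrac{\eps^2-4y^2}{8}\,dy$ scale as $\eps^5$, $\eps^3$ and $\eps^3$ respectively, and when multiplied by the $x$-integrals of $|\theta_\eps'|^2$, $|\theta_\eps|^2$, $\theta_\eps$ (which are $O(1)$, $|\omega|+O(\eps)$, $|\omega|+O(\eps)$) the dominant contribution is $-\tfrac{\eps^3}{24}|\omega|+O(\eps^4)$; hence $T(\O_\eps)\ge\tfrac{\eps^3}{12}|\omega|+O(\eps^4)$.

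Next I would establish the matching upper bound on $T(\O_\eps)$, i.e. a lower bound on the energy. The cheapest route is a one-dimensional comparison: since $\O_\eps\subset\R^{d-1}\times(-\eps/2,\eps/2)$, any $u\in H^1_0(\O_\eps)$ vanishes on the top and bottom faces, so for a.e. $x$ the slice $y\mapsto u(x,y)$ lies in $H^1_0(-\eps/2,\eps/2)$ and the Poincaré inequality in one dimension gives a lower bound on $\int|\partial_y u|^2$. More directly, by monotonicity $T(\O_\eps)\le T(\omega\times\R \text{ truncated})$ — cleaner is to note $\O_\eps\subset S_\eps:=\R^{d-1}\times(-\eps/2,\eps/2)$ intersected with a large ball, but the slab itself has infinite measure, so instead I would use the fact that $w_{\O_\eps}\le w_{\text{slab in }y\text{ only}}$: comparing with the function $v(y)=\tfrac{\eps^2-4y^2}{8}$, which solves $-v''=1$ with $v(\pm\eps/2)=0$, the maximum principle applied slicewise gives $w_{\O_\eps}(x,y)\le v(y)$ pointwise, hence $T(\O_\eps)=\int_{\O_\eps}w_{\O_\eps}\le|\omega|\int_{-\eps/2}^{\eps/2}v(y)\,dy=|\omega|\cdot\tfrac{\eps^3}{12}$. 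Combining the two bounds gives $T(\O_\eps)=\tfrac{\eps^3}{12}|\omega|+O(\eps^4)$.

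The main obstacle is making the error term genuinely $O(\eps^4)$ rather than merely $o(\eps^3)$ in the lower bound: one must control the $x$-integral $\int_\omega|\theta_\eps'|^2\,dx$, which is of order $|\partial\omega|/\eps \times \eps \cdot (1/\eps)^2 \cdot$... — carefully, $|\theta_\eps'|\sim 1/\eps$ on a collar of volume $\sim\eps|\partial\omega|$, giving $\int|\theta_\eps'|^2\sim|\partial\omega|/\eps$, which when multiplied by the $\eps^5$ factor from the $y$-integral yields a contribution of order $\eps^4|\partial\omega|$; similarly the collar corrections to $\int|\theta_\eps|^2$ and $\int\theta_\eps$ are $O(\eps|\partial\omega|)$ and produce $O(\eps^4)$ terms. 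Thus the convexity (or at least finite perimeter / Lipschitz) hypothesis on $\omega$ is exactly what is needed to guarantee $\HH^{d-2}(\partial\omega)<\infty$ and hence the clean $O(\eps^4)$ remainder. The upper bound via the slicewise maximum principle is essentially free and contributes no error, so all the work is in the test-function estimate.
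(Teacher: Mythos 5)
Your proof is correct, and it takes a genuinely different route from the paper's. The paper defers the proof of Proposition~\ref{prop} to Section~\ref{app} and derives it from heat-kernel machinery: writing $\O_\eps=M_1\times M_2$ with $M_1=(0,\eps)$ and $M_2=\omega$, the product formula $p_{M_1\times M_2}=p_{M_1}p_{M_2}$ gives $T(\O_\eps)=\int_0^\infty Q_{M_1}(t)Q_{M_2}(t)\,dt$; the upper bound $T(\O_\eps)\le T(M_1)|M_2|=\eps^3|\omega|/12$ comes from the crude bound $Q_{M_2}(t)\le|M_2|$, and the lower bound is the specialization of the general heat-content estimate of Theorem~\ref{the2}, whose proof exploits the pointwise lower bound on $u_\O$ of Lemma~\ref{lem3} and the monotonicity of $\HH^{d_2-1}(\partial M_2(r))$ for convex sets. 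Your proof, by contrast, is purely elliptic/variational and mirrors Proposition~\ref{rectangle}: a test function $u(x,y)=\frac{\eps^2-4y^2}{8}\theta_\eps(x)$ in the energy $\E_1$ yields the lower bound on $T$ (the collar volume and $\int_\omega|\theta_\eps'|^2$ are controlled exactly as you say, via the monotonicity of inner parallel sets of a bounded convex body; note $|\omega|<\infty$ plus convexity already forces $\omega$ bounded, so $\HH^{d-2}(\partial\omega)<\infty$), while for the upper bound the comparison function $v(y)=\frac{\eps^2-4y^2}{8}$ satisfies $-\Delta v=1$ in $\O_\eps$ and $v\ge 0$ on $\partial\O_\eps$, so the maximum principle gives $w_{\O_\eps}\le v$ and $T(\O_\eps)\le\frac{\eps^3}{12}|\omega|$ with no error at all. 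What each approach buys: yours is more elementary, self-contained, and does not require the heat-kernel apparatus of Section~\ref{app}; the paper's route, while heavier, establishes the more general and explicitly quantified Theorem~\ref{the2} (arbitrary open $M_1$ of finite measure, explicit constant $\C_{d_2}$), of which the proposition is an immediate corollary, and that theorem is of independent interest later in the paper.
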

We defer the proof to Section \ref{app}.

For a ball of radius $R$ we have
\be\label{dd}
\lambda_1(B)=\frac{j^2_{d/2-1,1}}{R^2},\qquad T(B)=\frac{\omega_d R^{d+2}}{d(d+2)},\qquad|B|=\omega_dR^d,
\ee
so that
$$F(B)=\frac{\lambda_1(B)T(B)}{|B|}=\frac{j^2_{d/2-1,1}}{d(d+2)}:=\alpha_d$$
For instance, we have
$$\alpha_2\approx0.723,\qquad\alpha_3\approx0.658,\qquad\alpha_4\approx0.612.$$
Moreover, since $j_{\nu,1}=\nu+O(\nu^{1/3}),\ \nu\to\infty$, we have that $\lim_{d\to\infty}\alpha_d=\frac14$. A plot of $\alpha_d$ is given in Figure \ref{fig1}.
\begin{figure}[h!]
\centering {\includegraphics[width=9cm]{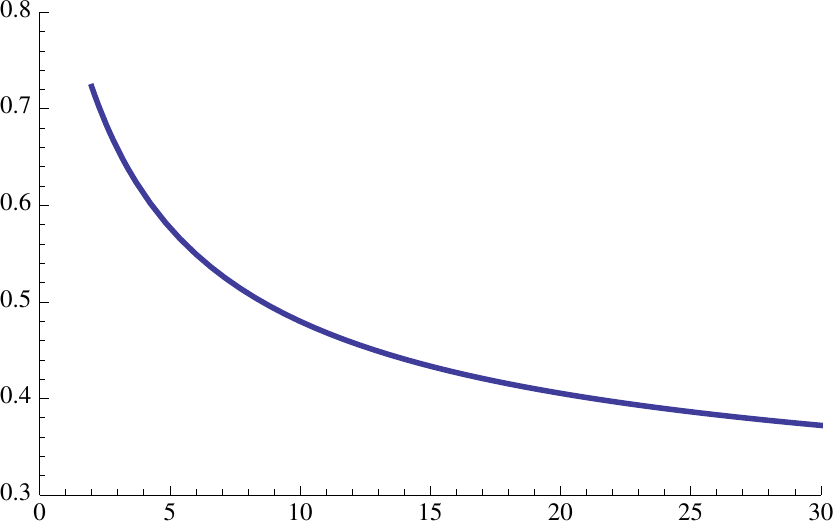}}
\caption{The plot
of $\alpha_d$ for $2\le d\le30$.}\label{fig1}
\end{figure}

We now consider a slab $\O_\eps=\omega\times(0,\eps)$ of thickness $\eps\to0$. We have by separation of variables and Proposition \ref{prop} that
$$\lambda_1(\O_\eps)=\frac{\pi^2}{\eps^2}+\lambda_1(\omega)\approx \frac{\pi^2}{\eps^2},\qquad T(\O_\eps)\approx\frac{\eps^3|\omega|}{12},\qquad|\O_\eps|=\eps|\omega|,$$
so that
$$F(\O_\eps)\approx\frac{\pi^2}{12}\approx0.822.$$
This shows that in any dimension the slab is better than the ball. Using domains in $\R^d$ with $k$ small dimensions and $d-k$ large dimensions does not improve the value of the cost functional $F$. In fact, if $\omega$ is a convex domain in $\R^{d-k}$ and $B_k(\eps)$ a ball in $\R^k$, then by Theorem \ref{the2} with $\O_\eps=\omega\times B_k(\eps)$ we have that
$$\lambda_1(\O_\eps)\approx\frac{1}{\eps^2}\lambda_1\big(B_k(1)\big),\qquad T(\O_\eps)\approx\eps^{k+2}|\omega|T(B_k(1)),\qquad|\O_\eps|=\eps^k|\omega||B_k(1)|,$$
so that
$$F(\O_\eps)\approx\frac{j^2_{k/2-1,1}}{k(k+2)}\le\frac{\pi^2}{12}.$$
This supports the following.

\begin{conj}\label{conject}
For every dimension $d$ we have $\K_d=\pi^2/12,$ and no domain in $\R^d$ maximizes the functional $F$ for $d>1$. The maximal value $\K_d$ is asymptotically reached by a thin slab
$\O_\eps=\omega\times(0,\eps)$, with $\omega\subset\R^{d-1}$, as $\eps\to0$.
\end{conj}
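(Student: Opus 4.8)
\medskip
\noindent\textbf{A plan for Conjecture \ref{conject}.}
The lower bound $\K_d\ge\pi^2/12$ is already in hand, since for the slabs $\O_\eps=\omega\times(0,\eps)$ one has $F(\O_\eps)\to\pi^2/12$ as $\eps\downarrow0$; what remains is the reverse inequality $\K_d\le\pi^2/12$ together with the non-existence of a maximizer for $d>1$, after which the statement that slabs form an asymptotically maximizing sequence is immediate. The first thing I would do is recast the target inequality spectrally: writing $(u_k)$ for an $L^2(\O)$-orthonormal basis of Dirichlet eigenfunctions and $c_k=\int_\O u_k$, expansion of $w_\O=(-\Delta)^{-1}1$ gives $|\O|=\sum_k c_k^2$ and $T(\O)=\sum_k\lambda_k^{-1}c_k^2=\int_0^\infty H_\O(t)\,dt$, with heat content $H_\O(t)=\sum_k c_k^2e^{-\lambda_k t}$, so that the claim becomes $\sum_k\frac{\lambda_1(\O)}{\lambda_k(\O)}c_k^2\le\frac{\pi^2}{12}\sum_k c_k^2$. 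The crude bound $H_\O(t)\le|\O|e^{-\lambda_1(\O)t}$ — which is exactly the content of Proposition \ref{ineq1} — only yields the constant $1$; the point is therefore to show that the higher modes necessarily carry a definite fraction $1-\pi^2/12$ of the mass $|\O|$.

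The second, more promising, reduction is via the level sets of $w=w_\O$. With $M=\|w\|_\infty$ and $\mu(t)=|\{w>t\}|$, one has $\mu(0)=|\O|$, $\mu(M^-)=0$ and the Talenti ODE $-\mu'(t)\ge d^2\omega_d^{2/d}\mu(t)^{1-2/d}$ from coarea and isoperimetry. Testing the Rayleigh quotient of $\lambda_1$ with $u=f(w)$, $f(0)=0$, and using the identities $\int_\O (f'(w))^2|\nabla w|^2\,dx=\int_0^M (f')^2\mu\,dt$ and $\int_\O f(w)^2\,dx=\int_0^M f^2(-\mu')\,dt$, gives $\lambda_1(\O)\le\Lambda[\mu]$, the bottom of the Sturm--Liouville problem $-(\mu f')'=-\lambda\mu'f$ on $(0,M)$ with $f(0)=0$. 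Since $T(\O)=\int_0^M\mu\,dt$, the affine rescaling $\tau=t/M$, $\nu(\tau)=\mu(M\tau)/|\O|$ makes $M$ disappear and leaves
$$\frac{\lambda_1(\O)\,T(\O)}{|\O|}\ \le\ \Lambda[\nu]\,\int_0^1\nu(\tau)\,d\tau,\qquad \Lambda[\nu]:=\inf\Big\{\tfrac{\int_0^1 (g')^2\nu\,d\tau}{\int_0^1 g^2(-\nu')\,d\tau}:g(0)=0\Big\}.$$
For the slab profile $\nu(\tau)=\sqrt{1-\tau}$ the substitution $\sigma=\sqrt{1-\tau}$ reduces $\Lambda[\nu]$ to one half of the first eigenvalue $(\pi/2)^2$ of $-g''$ on $(0,1)$ with one Dirichlet and one Neumann endpoint, so $\Lambda[\nu]=\pi^2/8$, and since $\int_0^1\sqrt{1-\tau}\,d\tau=\tfrac23$ the right-hand side is exactly $\pi^2/12$. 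Thus the conjecture would follow from the one-dimensional inequality $\Lambda[\nu]\int_0^1\nu\le\pi^2/12$ over all profiles $\nu$ arising (after rescaling) from a torsion function, with equality only in the slab limit; the non-existence for $d>1$, and the fact that the slab is then the unique asymptotically optimal shape, would come out of the expected strictness of this inequality together with the strictness, for a genuine bounded domain, of both $\lambda_1\le\Lambda[\mu]$ and the Talenti ODE.

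The step I expect to be the real obstacle is this last one-dimensional inequality. If $\nu$ is permitted to be an arbitrary decreasing function with $\nu(0)=1$, $\nu(1^-)=0$, then $\Lambda[\nu]\int_0^1\nu$ can be pushed arbitrarily close to $1$ (take $\nu\equiv1$ on $(0,1-\delta)$ and linear to $0$ thereafter), so the admissibility of $\nu$ must be used decisively. The only structural constraint built in so far is the Talenti ODE, but under the rescaling it reads $-\nu'(\tau)\ge\tfrac{d}{2}\,\beta\,\nu(\tau)^{1-2/d}$ with $\beta=\|w_\O\|_\infty/\|w_B\|_\infty\in(0,1]$ ($B$ the ball of volume $|\O|$), and $\beta\to0$ exactly in the slab regime — so Talenti's comparison by itself does not rule out the bad profiles. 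Overcoming this seems to require a genuinely finer, essentially two-sided, comparison for the distribution function of $w_\O$, or some control of $\lambda_1(\O)$ by data invisible to $\mu_\O$ (the bound $\lambda_1\le\Lambda[\mu_\O]$ is sharp precisely when $u_1$ is a function of $w_\O$, as for balls and slabs, but not otherwise). The alternative route via the heat content — sharpening $H_\O(t)\le|\O|e^{-\lambda_1 t}$ for small $t$ through the expansion $H_\O(t)=|\O|-\pi^{-1/2}\HH^{d-1}(\partial\O)\sqrt t+o(\sqrt t)$ — meets the dual difficulty that $\HH^{d-1}(\partial\O)$ is uncontrolled for rough $\O$. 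Failing a full resolution, I would at least aim for the non-existence statement directly: a maximizer $\O^*$ of finite measure would be stationary under volume-preserving deformations, balls are stationary but, as shown above by comparison with rectangles, not optimal, so computing the second shape derivative at a ball to exclude it as a local maximum and then classifying the remaining critical shapes would be the natural — though still hard — substitute.
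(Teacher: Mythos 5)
The statement you are addressing is a \emph{conjecture} in the paper, not a theorem: the authors do not prove it, and the only evidence they offer is the computation $F(\O_\eps)\approx\pi^2/12$ for thin cylinders $\O_\eps=\omega\times(0,\eps)$ (via separation of variables and the asymptotics of Proposition~\ref{prop}/Theorem~\ref{the2}), together with the observation that $k$-thin cylinders $\omega\times B_k(\eps)$ give the smaller value $j^2_{k/2-1,1}/(k(k+2))$. Your proposal correctly recognizes this, establishes the lower bound $\K_d\ge\pi^2/12$ by the same slab computation, and then, rather than claiming a proof, lays out a reduction strategy and honestly names where it breaks. That self-assessment is accurate and should stand.

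Your level-set reformulation is a genuinely different angle from anything in the paper, and the calculation checks out: for the slab profile $\nu(\tau)=\sqrt{1-\tau}$ the substitution $\sigma=\sqrt{1-\tau}$ does give $\Lambda[\nu]=\tfrac12(\pi/2)^2=\pi^2/8$ with $\int_0^1\nu=2/3$, hence $\Lambda[\nu]\int\nu=\pi^2/12$. This packages the sharp constant into a clean one-dimensional variational problem, which the paper's heat-kernel approach does not do. But the obstacle you flag is exactly the right one and is fatal to the plan as it stands: the class of admissible rescaled profiles $\nu$ is not characterized, and the Talenti ODE $-\nu'\ge\frac{d}{2}\beta\,\nu^{1-2/d}$ degenerates as $\beta=\|w_\O\|_\infty/\|w_B\|_\infty\to0$ in precisely the slab regime, so it cannot by itself exclude profiles pushing $\Lambda[\nu]\int\nu$ toward $1$. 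Moreover the inequality $\lambda_1(\O)\le\Lambda[\mu_\O]$ goes the wrong way for an upper bound on $F$; what you actually need is a \emph{lower} bound for $\lambda_1(\O)T(\O)/|\O|$ to be useless and an \emph{upper} bound to be established, and bounding $\lambda_1$ from above by the Sturm--Liouville eigenvalue only gives $F(\O)\le\Lambda[\nu]\int\nu$ when equality is a priori unclear in the other direction — you have the chain the right way, but the right-hand side is then over an uncontrolled class. The alternative heat-content route you mention ($H_\O(t)=|\O|-\pi^{-1/2}\per(\O)\sqrt t+o(\sqrt t)$, which would sharpen \eqref{a10}) meets the dual difficulty that $\per(\O)$ is unbounded on the admissible class; note the paper's own Theorem~\ref{the2} is a rigorous quantitative version of this idea but only for product domains $M_1\times M_2$ with $M_2$ convex, which is exactly why it suffices for the slab asymptotics but not for general $\O$. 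In short: your proposal is a reasonable research plan, not a proof, and you have said so; the paper does not prove the statement either, so there is no proof to compare against — only heuristics, and yours are at least as substantive as the paper's.
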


\section{The attainable set}\label{splotting}

In this section we bound the measure by $|\O|\le1$. Our goal is to plot the subset of $\R^2$ whose coordinates are the eigenvalue $\lambda_1(\O)$ and the torsion $T(\O)$. It is convenient to change coordinates and to set for a given admissible domain $\O,$
$$x=\lambda_1(\O),\qquad y=\big(\lambda_1(\O)T(\O)\big)^{-1}.$$
In addition, define
$$E=\left\{(x,y)\in\R^2\ :\ x=\lambda_1(\O),\ y=\big(\lambda_1(\O)T(\O)\big)^{-1}\hbox{ for some $\O$ with }|\O|\le1\right\}.$$
Therefore, the optimization problem \eqref{optpb} can be rewritten as
$$\min\left\{\Phi\big(x,1/(xy)\big)\ :\ (x,y)\in E\right\}.$$

\begin{conj}\label{conject1}
The set $E$ is closed.
\end{conj}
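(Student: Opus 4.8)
The plan is to show that $E$ is closed by taking a sequence $(x_n,y_n)\in E$ converging to some $(x,y)\in\R^2$, selecting admissible domains $\O_n$ realizing these coordinates, and extracting a limit domain $\O$ with $|\O|\le 1$ for which $\lambda_1(\O)=x$ and $\big(\lambda_1(\O)T(\O)\big)^{-1}=y$. The natural tool is the $\gamma$-convergence machinery: since we work with a fixed bounded box $D$ (we may assume $D$ is a large ball containing an optimal configuration, or invoke the concentration-compactness setup of \cite{bulbk}, \cite{tesi} if $D=\R^d$), the class $\{\O\subset D:|\O|\le 1\}$ is compact for $\gamma$-convergence in the relaxed sense of quasi-open sets (capacitary measures), and both $\lambda_1$ and $T$ are $\gamma$-continuous. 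So up to a subsequence $\O_n\stackrel{\gamma}{\to}\O$ with $\lambda_1(\O_n)\to\lambda_1(\O)$ and $T(\O_n)\to T(\O)$, and by lower semicontinuity of Lebesgue measure under $\gamma$-convergence, $|\O|\le\liminf|\O_n|\le 1$. This would give $x=\lambda_1(\O)$ and $T(\O_n)\to T(\O)$, hence $y=\big(\lambda_1(\O)T(\O)\big)^{-1}$, provided $\lambda_1(\O)T(\O)>0$.

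The first real step is therefore to rule out degeneration of the coordinates. If $x_n\to x$ with $x=0$, this is impossible because $\lambda_1(\O_n)\ge\pi j_{0,1}^2$ by the Faber-Krahn inequality (Remark \ref{facts}(vi)) together with $|\O_n|\le 1$, so $x\ge\pi j_{0,1}^2>0$; similarly $x$ cannot be $+\infty$ in the limit. For $y$, note $y_n=\big(\lambda_1(\O_n)T(\O_n)\big)^{-1}\ge 1/|\O_n|\ge 1$ by Proposition \ref{ineq1}, so $y\ge 1$, which excludes $y=0$; and $y_n$ is bounded above since the Kohler-Jobin inequality (Remark \ref{facts}(vii)) gives $\lambda_1(\O_n)^2 T(\O_n)\ge\frac{\pi}{8}j_{0,1}^4$, whence $\big(\lambda_1(\O_n)T(\O_n)\big)^{-1}=\lambda_1(\O_n)\big(\lambda_1(\O_n)^2T(\O_n)\big)^{-1}\le\frac{8}{\pi j_{0,1}^4}\,\lambda_1(\O_n)$, and $\lambda_1(\O_n)$ stays bounded because it converges to $x<\infty$. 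Thus the limiting point $(x,y)$ lies in the region $x\ge\pi j_{0,1}^2$, $y\ge 1$, and in particular $\lambda_1(\O)T(\O)=1/y\in(0,\infty)$, so the limit domain $\O$ is nondegenerate and $(x,y)\in E$.

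The main obstacle is the passage to the limit itself when $D=\R^d$ and admissible sets are unbounded, because then $\gamma$-compactness of $\A$ can fail: the domains $\O_n$ may "run off to infinity" or split into far-apart pieces. One must either (a) restrict honestly to $\O\subset D$ with $D$ bounded, where the result of \cite{budm93} gives compactness of $\A$ directly, or (b) use a concentration-compactness argument as in \cite{bulbk}, \cite{tesi}: either the sequence concentrates (and one gets a genuine limit domain) or it splits, in which case one replaces $\O_n$ by a union of translated limit pieces; the subtlety is that $T$ is additive over disjoint components but $\lambda_1$ only sees the best component, so one must check that a splitting configuration can be re-assembled into a single admissible set with the same pair $(x,y)$ in the limit, or argue that the extremal sequences for points of $\partial E$ do not split. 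A secondary technical point, for which I expect no difficulty, is verifying that $\gamma$-convergence of quasi-open sets indeed yields convergence of the torsion function in $L^1$, hence of $T$; this is standard since $w_{\O_n}\to w_\O$ strongly in $L^2$ under $\gamma$-convergence and all $w_{\O_n}$ are uniformly bounded by $w_D$ (maximum principle), so $T(\O_n)=\int w_{\O_n}\to\int w_\O=T(\O)$ by dominated convergence.
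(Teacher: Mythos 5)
The statement you are trying to prove is labelled a \emph{conjecture} in the paper, and the authors give no proof of it; they explicitly remark only that its truth \emph{would} yield existence for problem \eqref{optpb}. So there is no paper argument to compare against, and your task is really to decide whether the sketch you wrote actually closes the question. It does not.

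The gap sits in the sentence ``the class $\{\O\subset D:|\O|\le 1\}$ is compact for $\gamma$-convergence in the relaxed sense of quasi-open sets (capacitary measures) \dots\ so up to a subsequence $\O_n\stackrel{\gamma}{\to}\O$.'' Compactness of the \emph{relaxed} class is not the same as closedness of the class of (quasi-)open sets inside it. The $\gamma$-limit of quasi-open sets $\O_n\subset D$ is in general a capacitary measure $\mu\in\M_0(D)$ which is \emph{not} of the form $\infty_{D\setminus\O}$ for any quasi-open $\O$. Along a $\gamma$-convergent subsequence you do get $\lambda_1(\O_n)\to\lambda_1(\mu)$ and $T(\O_n)\to T(\mu)$, but the point $\big(\lambda_1(\mu),(\lambda_1(\mu)T(\mu))^{-1}\big)$ need not belong to $E$, because $E$ is by definition the image of sets, not of measures. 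If you try to repair this by replacing $\mu$ with a set --- say its regular set $\O_\mu=\{w_\mu>0\}$, which does satisfy $|\O_\mu|\le 1$ --- then $\O_\mu\supset$ the support behaviour of $\mu$ pushes both coordinates in one direction: $\lambda_1(\O_\mu)\le\lambda_1(\mu)$ and $T(\O_\mu)\ge T(\mu)$. You cannot recover the original limit pair $(x,y)$, only dominate one coordinate while degrading the other. This is exactly why the Buttazzo--Dal Maso existence theorem \cite{budm93}, which you invoke, needs \emph{monotonicity} of the cost with respect to set inclusion: monotone costs are insensitive to this replacement. For the attainable-set problem, where you need exact convergence of both $\lambda_1$ and $T$, this mechanism is unavailable, and the closedness of $E$ genuinely remains open.

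Your other remarks are sound but peripheral: the a priori bounds $x\ge\pi j_{0,1}^2$, $y\ge 1$, and the upper bound on $y_n$ via Kohler--Jobin correctly rule out degeneration of the limiting coordinates, and the concentration-compactness issue you raise for $D=\R^d$ is real. But even in the simplest setting, $D$ bounded, the argument stalls at the step above. A proof of the conjecture would need a new idea --- for instance, a way to continuously deform the limiting capacitary measure back into a genuine set along a path that preserves (or hits) the pair $(\lambda_1,T)$, in the spirit of the vertical-convexity argument via continuous Steiner symmetrization that the paper uses for the cruder description of $E$ --- and no such argument is currently known.
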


We remark that the conjecture above, if true, would imply the existence of a solution of the optimization problem \eqref{optpb} for many functions $\Phi$. Below we will analyze the variational problem in case  $\Phi(x,y)=kx+\frac{1}{xy},$ where $k>0$.

\begin{teo}\label{closedex}
Let $d=2,3,\cdots$, and let
$$k^*_d=\frac{1}{2d\omega_d^{4/d}j^2_{d/2-1,1}}.$$
Consider the optimization problem
\be\label{pbk}
\min\left\{k\lambda_1(\O)+T(\O)\ :\ |\O|\le1\right\}.
\ee
If $0<k\le k^*_d$ then the ball with radius
\be\label{e2}
R_k=\left(\frac{2kdj^2_{d/2-1,1}}{\omega_d}\right)^{1/(d+4)}
\ee
is the unique minimizer (modulo translations and sets of capacity $0$).

\noindent If $k> k^*_d$ then the ball $B$ with measure $1$ is the unique minimizer.
\end{teo}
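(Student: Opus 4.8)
The plan is to show that, under the measure constraint $|\O|\le 1$, the functional $k\lambda_1(\O)+T(\O)$ can be minimized by reducing to balls and then optimizing over a single radius. The first step is to exploit the two extremal inequalities already recorded in Remark \ref{facts}: the Faber--Krahn inequality $|\O|\lambda_1(\O)\ge\pi j^2_{0,1}$ (in general $d$, $|\O|\lambda_1(\O)\ge\omega_d^{2/d}j^2_{d/2-1,1}\cdot\omega_d^{-1}\cdots$, i.e. $\lambda_1(\O)\ge\lambda_1(B_{|\O|})$ where $B_{|\O|}$ is the ball of the same volume) and the Saint-Venant inequality $|\O|^{-2}T(\O)\le|B|^{-2}T(B)$, i.e. $T(\O)\le T(B_{|\O|})$. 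Since the coefficient of $\lambda_1$ is $+k>0$ and the coefficient of $T$ is... positive, these two inequalities pull in opposite directions: replacing $\O$ by the ball of the same volume \emph{lowers} $\lambda_1$ but \emph{raises} $T$. So a naive "symmetrize to a ball" argument does not immediately work, and this tension is the main obstacle.

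To get around it I would first reduce to balls by a different route: fix the value $v=|\O|\le 1$ and ask what the minimum of $k\lambda_1+T$ is over all $\O$ with $|\O|=v$. I claim the infimum over such $\O$ is \emph{not} generally attained at the ball $B_v$, but the global infimum over $v\in(0,1]$ still is attained at some ball. Concretely, using $\lambda_1(\O)\ge\lambda_1(B_v)$ we have $k\lambda_1(\O)+T(\O)\ge k\lambda_1(B_v) + T(\O)$, and now I need an \emph{upper-type} control is useless — instead bound $T(\O)$ from below. Here one uses the Kohler--Jobin inequality (vii): $\lambda_1^2(\O)T(\O)\ge\lambda_1^2(B)T(B)$ with $B$ the ball of the \emph{same torsion}, equivalently $T(\O)\ge c_d\lambda_1(\O)^{-(d+2)/2}$ for the dimensional constant $c_d=\lambda_1^2(B)T(B)\big/$(appropriate power), since $\lambda_1^2 T$ is scale invariant only in $d=2$; in general $\lambda_1(\O)^{(d+4)/2}T(\O)\ge$ const. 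Thus, writing $x=\lambda_1(\O)$, we get
$$k\lambda_1(\O)+T(\O)\ \ge\ kx + c_d\,x^{-(d+4)/2}\ =:\ g(x),$$
and crucially $x=\lambda_1(\O)$ ranges over $[\lambda_1(B_1),\infty)=[\omega_d^{2/d}j^2_{d/2-1,1},\infty)$ by Faber--Krahn together with the constraint $|\O|\le 1$. So the problem is reduced to minimizing the one-variable function $g$ over this half-line.

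The final step is elementary calculus on $g(x)=kx+c_d x^{-(d+4)/2}$. It is strictly convex, with unique unconstrained minimizer $x_k$ solving $k=\tfrac{d+4}{2}c_d x_k^{-(d+6)/2}$; one checks that $x_k\le\lambda_1(B_1)$ precisely when $k\le k^*_d$, which (after plugging $c_d=\lambda_1(B_1)^{(d+4)/2}T(B_1)$ and the explicit formulas \eqref{dd}) reproduces the stated threshold $k^*_d=\big(2d\omega_d^{4/d}j^2_{d/2-1,1}\big)^{-1}$. In the regime $k\le k^*_d$ the constrained minimum of $g$ sits at the unconstrained $x_k$, and equality in both Kohler--Jobin and Faber--Krahn forces $\O$ to be the ball realizing $\lambda_1(\O)=x_k$, which is exactly the ball of radius $R_k$ in \eqref{e2} (one verifies $\lambda_1(B_{R_k})=x_k$ using $\lambda_1(B)=j^2_{d/2-1,1}R^{-2}$); note $R_k\le 1$-volume is automatic since $x_k\ge\lambda_1(B_1)$. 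In the regime $k>k^*_d$, $g$ is increasing on $[\lambda_1(B_1),\infty)$, so the minimum is at the left endpoint $x=\lambda_1(B_1)$, attained only by the ball $B$ of measure $1$; uniqueness again comes from the strict convexity of $g$ and the rigidity (equality cases) of Kohler--Jobin and Faber--Krahn. The one delicate point to be careful about is the equality-case / rigidity statement for Kohler--Jobin (that equality holds only for balls, up to capacity-zero modifications), which is why the uniqueness is stated "modulo translations and sets of capacity $0$"; I would cite \cite{brasco} for the quantitative/rigidity form.
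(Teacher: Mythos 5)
Your proposal is correct in substance and genuinely different from the paper's proof; it is, I think, cleaner. The paper drops the volume constraint, minimizes over dilations $t\O$, and observes that what remains is a monotone function of the scale-invariant Kohler--Jobin quantity $T(\O)^2\lambda_1(\O)^{d+2}$; this identifies a ball $B_{R_k}$ as the unconstrained minimizer, and when $|B_{R_k}|>1$ (i.e.\ $k>k^*_d$) it switches to a separate splitting trick, writing $k\lambda_1+T=(k-k^*_d)\lambda_1+(k^*_d\lambda_1+T)$ and minimizing each piece at the unit ball by Faber--Krahn and the unconstrained computation. You instead apply Kohler--Jobin once, in the form $T(\O)\ge c_d\,\lambda_1(\O)^{-(d+2)/2}$, and Faber--Krahn once, as $\lambda_1(\O)\ge\lambda_1(B_1)$ with $B_1$ the unit-measure ball, reducing the entire problem to minimizing the strictly convex $g(x)=kx+c_d\,x^{-(d+2)/2}$ on $[\lambda_1(B_1),\infty)$. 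The two regimes $k\le k^*_d$ and $k>k^*_d$ then become the interior-versus-endpoint dichotomy for a convex function on a half-line, and uniqueness follows from the Kohler--Jobin equality case together with strict convexity of $g$ (Faber--Krahn rigidity is not actually needed: once $\O$ is forced to be a ball, $\lambda_1(\O)$ determines it). This avoids the paper's ad hoc decomposition for the regime $k>k^*_d$ entirely.

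A few corrections to your writeup, none of which affects the validity once repaired. Your exponents are off by $2$ throughout: the scale-invariant Kohler--Jobin quantity is $\lambda_1^{(d+2)/2}T$, not $\lambda_1^{(d+4)/2}T$, so the bound is $T(\O)\ge c_d\,\lambda_1(\O)^{-(d+2)/2}$ with $c_d=\lambda_1(B)^{(d+2)/2}T(B)=\omega_d\, j_{d/2-1,1}^{d+2}/(d(d+2))$ by \eqref{dd}, hence $g(x)=kx+c_d\,x^{-(d+2)/2}$ and the critical point satisfies $k=\tfrac{d+2}{2}c_d\,x_k^{-(d+4)/2}$. The sentence ``one checks that $x_k\le\lambda_1(B_1)$ precisely when $k\le k^*_d$'' has the inequality reversed; it should be $x_k\ge\lambda_1(B_1)$, consistent with what you in fact use two sentences later when you observe that $|B_{R_k}|\le 1$ is automatic. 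Finally, the opening digression about fixing $|\O|=v$ and the Saint--Venant bound $T(\O)\le T(B_{|\O|})$ is never used: the reduction you actually carry out parametrizes by $\lambda_1$, not by volume, and the Saint--Venant inequality plays no role.
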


\begin{proof}
Consider the problem \eqref{pbk} without the measure constraint \be\label{without}
\min\left\{k\lambda_1(\O)+T(\O)\ :\ \O\subset\R^d\right\}. \ee
Taking $t\O$ instead of $\O$ gives that
$$k\lambda_1(t\O)+T(t\O)=kt^{-2}\lambda_1(\O)+t^{d+2}T(\O).$$
The optimal $t$ which minimizes this expression is given by
$$t=\left(\frac{2k\lambda_1(\O)}{(d+2)T(\O)}\right)^{1/(d+4)}.$$
Hence \eqref{without} equals
\be\label{aftert}
\min\left\{(d+4)\left(\frac{k^{d+2}}{4(d+2)^{d+2}}T^2(\O)\lambda_1^{d+2}(\O)\right)^{1/(d+4)}:\
\O\subset\R^d\ \right\}.
\ee
By the Kohler-Jobin inequality in $\R^d$, the minimum in \eqref{aftert} is attained by any ball. Therefore the minimum in \eqref{without} is given by a ball $B_R$ such that
$$\left(\frac{2k\lambda_1(B_R)}{(d+2)T(B_R)}\right)^{1/(d+4)}=1.$$
This gives \eqref{e2}. We conclude that the measure constrained problem \eqref{pbk} admits the ball $B_{R_k}$ as a solution whenever $\omega_dR_k^d\le1$. That is $k\le k^*_d.$

Next consider the case $k>k^*_d$. Let $B$ be the open ball with measure $1$. It is clear that
$$\min\{k\lambda_1(\O)+T(\O)\ :\ |\O|\le 1\}
\le k\lambda_1(B)+T(B).$$
To prove the converse we note that for $k>k^*_d$,
\begin{align}\label{e4}
\min\{k\lambda_1&(\O)+T(\O):|\O|\le 1\}\nonumber\\
&\ge\min\{(k-k^*_d)\lambda_1(\O):|\O|\le1\}
+\min\{k^*_d\lambda_1(\O)+T(\O):|\O|\le 1\}.
\end{align}
The minimum in the first term in the right hand side of \eqref{e4} is attained for $B$ by Faber-Krahn, whereas the minimum in second term is attained for $B_{R_{k^*_d}}$ by our previous unconstrained calculation. Since $|B_{R_{k^*_d}}|=|B|=1$ we have by \eqref{e4} that
\begin{align*}
\min\{k\lambda_1&(\O)+T(\O):|\O|\le 1\}\nonumber\\
&\ge(k-k^*_d)\lambda_1(B)+ k^*_d\lambda_1(B)+T(B)\nonumber\\
&=k\lambda_1(B)+T(B).
\end{align*}
Uniqueness of the above minimizers follows by uniqueness of Faber-Krahn and Kohler-Jobin.
\end{proof}

It is interesting to replace the first eigenvalue in \eqref{pbk} be a higher eigenvalue. We have the following for the second eigenvalue.

\begin{teo}\label{lambda2}
Let $d=2,3,\cdots$, and let
$$l^*_d=\frac{1}{2d(2\omega_d)^{4/d}j^2_{d/2-1,1}}.$$
Consider the optimization problem
\be\label{pbk2}
\min\left\{l\lambda_2(\O)+T(\O)\ :\ |\O|\le1\right\}.
\ee
If $0<l\le l^*_d$ then the union of two disjoint balls with radii
\be\label{e7}
R_l=\left(\frac{ldj^2_{d/2-1,1}}{\omega_d}\right)^{1/(d+4)}
\ee
is the unique minimizer (modulo translations and sets of capacity $0$).

\noindent If $l> l^*_d$ then union of two disjoint balls with measure $1/2$ each is the unique minimizer.
\end{teo}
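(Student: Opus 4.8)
The plan is to mimic the proof of Theorem \ref{closedex} almost verbatim, replacing the first eigenvalue $\lambda_1$ by the second eigenvalue $\lambda_2$ and using the fact that the optimal configurations for $\lambda_2$ under a volume constraint are disjoint unions of two equal balls. The two ingredients I would need to isolate at the start are: first, the Kohler--Jobin type inequality for $\lambda_2$, namely that $\lambda_2^2(\O)T(\O)$ is minimized (for fixed ``shape budget'') by the union of two equal balls; and second, the Hong--Krahn--Szeg\"o inequality stating that among sets of given measure $\lambda_2$ is minimized by two disjoint equal balls. The former does not follow directly from the scalar Kohler--Jobin inequality of Remark \ref{facts}(vii), so establishing it is where I expect the main work (and the main obstacle) to lie.

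Granting those two facts, the argument runs as follows. For the unconstrained problem $\min\{l\lambda_2(\O)+T(\O):\O\subset\R^d\}$, I would scale $\O\mapsto t\O$, use $\lambda_2(t\O)=t^{-2}\lambda_2(\O)$ and $T(t\O)=t^{d+2}T(\O)$ exactly as in Remark \ref{facts}(iii)--(iv), optimize in $t$ to get $t=\big(2l\lambda_2(\O)/((d+2)T(\O))\big)^{1/(d+4)}$, and reduce the problem to minimizing $\big(l^{d+2}T^2(\O)\lambda_2^{d+2}(\O)/(4(d+2)^{d+2})\big)^{1/(d+4)}$ over all $\O$. By the Kohler--Jobin inequality for $\lambda_2$ this minimum is attained by a union $B\sqcup B$ of two equal balls, and then solving the scaling equation $2l\lambda_2(B_{R_l}\sqcup B_{R_l}) = (d+2)T(B_{R_l}\sqcup B_{R_l})$ using $\lambda_2(B_R\sqcup B_R)=\lambda_1(B_R)=j_{d/2-1,1}^2/R^2$ and $T(B_R\sqcup B_R)=2\omega_dR^{d+2}/(d(d+2))$ yields the radius in \eqref{e7}. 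This unconstrained minimizer satisfies the measure constraint precisely when $2\omega_dR_l^d\le1$, i.e. when $l\le l^*_d$, which gives the first case.

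For $l>l^*_d$, I would reproduce the splitting trick of \eqref{e4}: write
$$\min\{l\lambda_2(\O)+T(\O):|\O|\le1\}\ge\min\{(l-l^*_d)\lambda_2(\O):|\O|\le1\}+\min\{l^*_d\lambda_2(\O)+T(\O):|\O|\le1\}.$$
The first term on the right is minimized, by the Hong--Krahn--Szeg\"o inequality, by two disjoint balls of measure $1/2$ each; the second term is minimized, by the unconstrained computation just carried out, by $B_{R_{l^*_d}}\sqcup B_{R_{l^*_d}}$, which has total measure exactly $1$ and consists of two balls of measure $1/2$ each. Since the same configuration (two disjoint balls of measure $1/2$) minimizes both terms simultaneously, the lower bound is achieved and equals $l\lambda_2(B')+T(B')$ for $B'$ the union of two balls of measure $1/2$; combined with the trivial upper bound from testing that configuration, this proves optimality. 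Uniqueness in both cases follows from the uniqueness in the Kohler--Jobin inequality for $\lambda_2$ and in the Hong--Krahn--Szeg\"o inequality (each modulo translations and sets of capacity zero).

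The step I expect to be the genuine obstacle is justifying the Kohler--Jobin inequality for $\lambda_2$: one wants that, over all open sets, $\lambda_2^2(\O)T(\O)\ge \lambda_2^2(B\sqcup B)T(B\sqcup B)$, and that equality forces $\O$ to be a union of two equal balls. The natural route is to combine the variational characterization of $\lambda_2$ as $\inf_{\O=\O_1\cup\O_2}\max\{\lambda_1(\O_1),\lambda_1(\O_2)\}$ over partitions with the subadditivity/concavity behavior of $T$ under disjoint unions and the scalar Kohler--Jobin inequality applied to each piece; one has to check that the optimal split is into two pieces carrying equal eigenvalue and that concentrating torsion cannot help. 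This is the only place where more than bookkeeping is required; everything else is a direct transcription of the $\lambda_1$ argument.
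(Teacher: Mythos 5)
Your proposal follows exactly the same scaling--Kohler--Jobin--splitting structure as the paper's proof and all the computations (the optimal $t$, the radius $R_l$, the threshold $l^*_d$, the use of Krahn--Szeg\"o in the splitting step) match; the only difference is that where you sketch a proof of the Kohler--Jobin inequality for $\lambda_2$ via the nodal partition characterization, additivity of $T$ on disjoint unions, and the scalar Kohler--Jobin inequality (which is indeed the right route), the paper simply cites Lemma~6 of \cite{vdB7} for that fact. Note also the correct scale-invariant quantity for general $d$ is $T(\O)\lambda_2(\O)^{(d+2)/2}$ rather than $\lambda_2^2(\O)T(\O)$, but you use the right exponent where it matters.
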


\begin{proof}
First consider the unconstrained problem
\be\label{e8}
\min\left\{l\lambda_1(\O)+T(\O)\ :\ \O\subset\R^d\right\}. \ee
Taking $t\O$ instead of $\O$ gives that
$$l\lambda_2(t\O)+T(t\O)=lt^{-2}\lambda_2(\O)+t^{d+2}T(\O).$$
The optimal $t$ which minimizes this expression is given by
$$t=\left(\frac{2l\lambda_2(\O)}{(d+2)T(\O)}\right)^{1/(d+4)}.$$
Hence \eqref{e8} equals
\be\label{e10}
\min\left\{(d+4)\left(\frac{l^{d+2}}{4(d+2)^{d+2}}T^2(\O)\lambda_2^{d+2}(\O)\right)^{1/(d+4)}:\
\O\subset\R^d\ \right\}.
\ee
It follows by the Kohler-Jobin inequality, see for example Lemma 6 in \cite{vdB7}, that the minimizer of \eqref{e10} is attained by the union of two disjoint balls $B_R$ and $B'_R$ with the same radius. Since $\lambda_2(B_R\cup B'_R)=\lambda_1(B_R)$ and $T(B_R\cup B'_R)=2T(B_R)$ we have, using \eqref{dd}, that the radii of these balls are given by \eqref{e7}. We conclude that the measure constrained problem \eqref{pbk2} admits the union of two disjoint balls with equal radius $R_l$ as a solution whenever $2\omega_dR_l^d\le1$. That is $l\le l^*_d.$

Next consider the case $l>l^*_d$. Let $\O$ be the union of two disjoint balls $B$ and $B'$ with measure $1/2$ each. Then
$$\min\{l\lambda_2(\O)+T(\O)\ :\ |\O|\le 1\}\le l\lambda_1(B)+2T(B).$$
To prove the converse we note that for $l>l^*_d$,
\begin{align}\label{e12}
\min\{l\lambda_2&(\O)+T(\O)\ :\ |\O|\le 1\}\nonumber\\
&\ge\min\{(l-l^*_d)\lambda_2(\O)\ :\ |\O|\le1\}
+\min\{l^*_d\lambda_2(\O)+T(\O)\ :\ |\O|\le1\}.
\end{align}
The minimum in the first term in the right hand side of \eqref{e12} is attained for $B\cup B'$ by the Krahn-Szeg\"o inequality, whereas the minimum in second term is attained for the union of two disjoint balls with radius $R_{l^*_d}$ by our previous unconstrained calculation. Since $|B_{R_{l^*_d}}|=1/2=|B|=|B'|$ we have by \eqref{e12} that
\begin{align*}
\min\{l\lambda_2(\O)+T(\O)\ :\ |\O|\le 1\}&\ge(l-l^*_d)\lambda_1(B)+ l^*_d\lambda_1(B)+2T(B)\nonumber\\
&=l\lambda_1(B)+2T(B).
\end{align*}
Uniqueness of the above minimizers follows by uniqueness of Krahn-Szeg\"o and Kohler-Jobin for the second eigenvalue.
\end{proof}

To replace the first eigenvalue in \eqref{pbk} be the $j$'th eigenvalue ($j>2$) is a very difficult problem since we do not know the minimizers of the $j$'th Dirichlet eigenvalue with a measure constraint nor the minimizer of the $j$'th Dirichlet eigenvalue a torsional rigidity constraint. However, if these two problems have a common minimizer then information similar to the above can be obtained.

Putting together the facts listed in Remark \ref{facts} we obtain the following inequalities.

\begin{itemize}

\item[(i)] By Faber-Krahn inequality we have $x\ge\pi j^2_{0,1}\approx18.168$.

\item[(ii)] By Conjecture \ref{conject} (if true) we have $y\ge12/\pi^2\approx1.216$.

\item[(iii)] By the bound on the torsion of Remark \ref{facts} v) we have $xy\ge8\pi\approx25.133$.

\item[(iv)] By the Kohler-Jobin inequality we have $y/x\le8/(\pi j^4_{0,1})\approx0.076$.

\item[(v)] The set $E$ is {\it conical}, that is if a point $(x_0,y_0)$ belongs to $E$, then all the half-line $\big\{(tx_0,ty_0)\ :\ t\ge1\big\}$ in contained in $E$. This follows by taking $\O_t=\O/t$ and by the scaling properties iii) and iv) of Remark \ref{facts}.

\item[(vi)] The set $E$ is {\it vertically convex}, that is if a point $(x_0,y_0)$ belongs to $E$, then all points $(x_0,ty_0)$ with $1\le t\le8/(\pi j^4_{0,1})$ belong to $E$. To see this fact, let $\O$ be a domain corresponding to the point $(x_0,y_0)\in E$. The {\it continuous Steiner symmetrization} path $\O_t$ (with $t\in[0,1]$) then continuously deforms the domain $\O=\O_0$ into a ball $B=\O_1$, preserving the Lebesgue measure and decreasing $\lambda_1(\O_t)$ (see \cite{brock} where this tool has been developed, and Section 6.3 of \cite{bubu05} for a short survey). The curve
$$x(t)=\lambda_1(\O_t),\qquad y(t)=\big(\lambda_1(\O_t)T(\O_t)\big)^{-1}$$
then connects the point $(x_0,y_0)$ to the Kohler-Jobin line $\left\{y=8x/(\pi j^4_{0,1})\right\}$, having $x(t)$ decreasing. Since $\big(x(t),y(t)\big)\in E$, the conicity of $E$ then implies vertical convexity.
\end{itemize}

\noindent A plot of the constraints above is presented in Figure \ref{fig2}.
\begin{figure}[h!]
\centering
{\includegraphics[width=10cm]{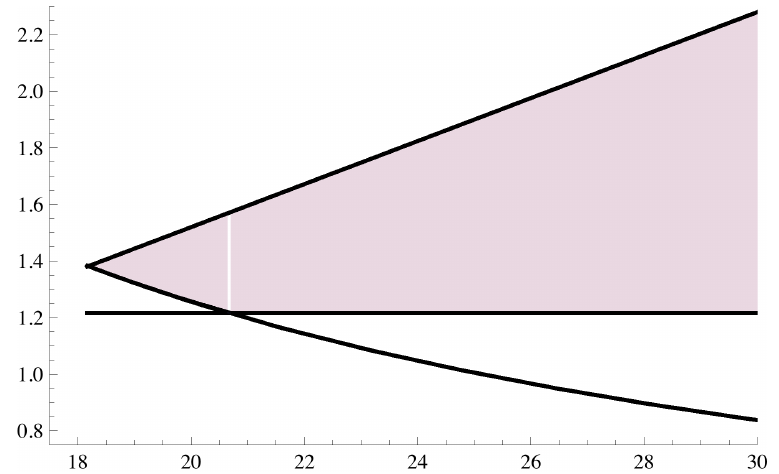}}
\caption{The admissible region $E$ is contained in the dark area.}\label{fig2}
\end{figure}
\noindent Some particular cases can be computed explicitly. Consider $d=2$, and let
$$\O=B_R\cup B_r, \hbox{with $B_R\cap B_r=\emptyset$, $r\le R$, and }\pi(R^2+r^2)=1.$$
An easy computation gives that
$$\lambda_1(\O)=\frac{j^2_{0,1}}{R^2},\qquad T(\O)=\frac{2\pi^2R^4-2\pi R^2+1}{8\pi},$$
so that the curve
$$y=\frac{8\pi x}{x^2-2\pi j^2_{0,1}x+2\pi^2j^4_{0,1}},\qquad\pi j^2_{0,1}\le x\le2\pi j^2_{0,1}$$
is contained in $E$ (see Figure \ref{fig3}).
\begin{figure}[h!]
\centering
{\includegraphics[width=10cm]{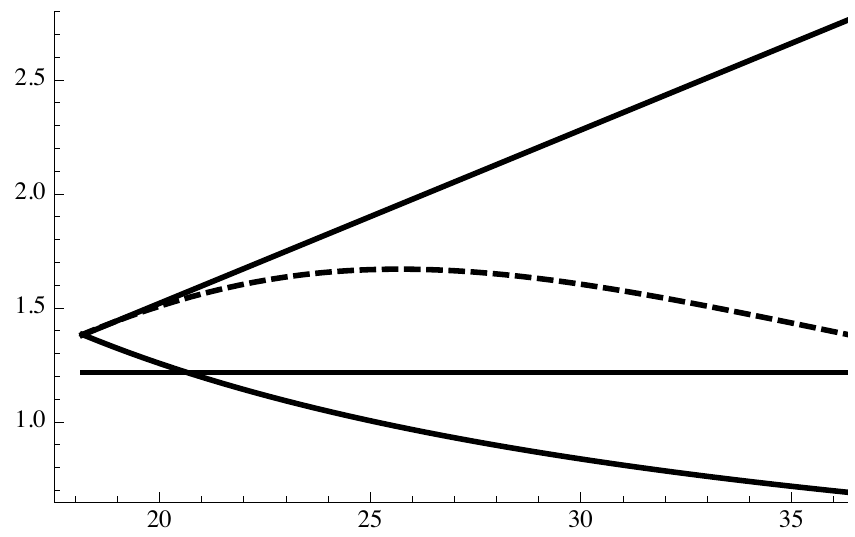}}
\caption{The dashed line corresponds to two disks of variable radii.}\label{fig3}
\end{figure}

\noindent If we consider the rectangle
$$\O=(0,b)\times(0,a)\hbox{,\ with $a\le b$, and }ab=1,$$
we have by Proposition \ref{rectangle}
$$\lambda_1(\O)=\pi^2\left(\frac{1}{a^2}+\frac{1}{b^2}\right)=\pi^2\left(\frac{1}{a^2}+a^2\right),\qquad T(\O)\ge\frac{a^3b}{12}-\frac{11a^4}{180}=\frac{a^2}{12}-\frac{11a^4}{180}.$$
Therefore
$$y\le h\big(x/(2\pi^2)\big),\qquad\hbox{where }h(t)=\frac{90}{\pi^2 t\left(11+15t-22t^2-(15+2t)\sqrt{t^2-1}\right)},\quad t\ge1.$$
By $E$ being conical the curve
$$y=h\big(x/(2\pi^2)\big)\qquad\pi^2\le x<+\infty$$
is contained in $E$ (see Figure \ref{fig4}).
\begin{figure}[h!]
\centering
{\includegraphics[width=10cm]{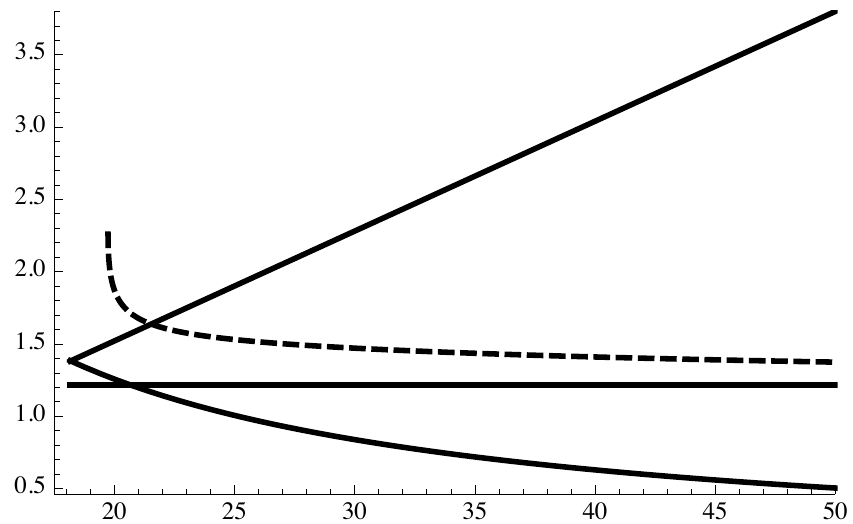}}
\caption{The dashed line is an upper bound to the line corresponding to rectangles.}\label{fig4}
\end{figure}

Besides the existence of optimal domains for the problem \eqref{optpb}, the regularity of optimal shapes is another very delicate and important issue. Very little is known about the regularity of optimal domains for spectral optimization problems (see for instance \cite{brla}, \cite{busub}, \cite{deve}, \cite{tesi}); the cases where only the first eigenvalue $\lambda_1(\O)$ and the torsion $T(\O)$ are involved could be simpler and perhaps allow to use the free boundary methods developed in \cite{altcaf}.

\section{Torsional rigidity and the heat equation}\label{app}

It is well known that the rich interplay between elliptic and parabolic partial differential equations provide tools for obtaining results in one field using tools from the other. See for
example the monograph by E. B. Davies \cite{davies}, and \cite{vdB4,vdB5,vdB6,vdB1,vdB2,vdB3} for some more recent results. In this section we use some heat equation tools to obtain new
estimates for the torsional rigidity. Before we do so we recall some basic facts relating the torsional rigidity to the heat equation. For an open set $\O$ in $\R^d$ with boundary $\partial \O$ we denote the Dirichlet heat kernel by $p_{\O}(x,y;t),\ x\in\O,\ y\in\O,\ t>0$. So
$$u_\O(x;t):=\int_\O p_\O(x,y;t)\,dy,$$
is the unique weak solution of
$$\begin{cases}
\ds\frac{\partial u}{\partial t}=\Delta u&x\in\O,\ t>0,\\
\lim_{t\downarrow0}u(x;t)=1&\hbox{in }L^2(\O),\\
u(x;t)=0&x\in\partial\O,\ t>0.
\end{cases}$$
The latter boundary condition holds at all regular points of $\partial\O$. We denote the heat content of $\O$ at time $t$ by
$$Q_{\O}(t)=\int_{\O}u_{\O}(x;t)\,dx.$$
Physically the heat content represents the amount of heat in $\O$ at time $t$ if $\O$ has initial temperature $1$, while $\partial\O$ is kept at temperature $0$ for all $t>0$. Since the Dirichlet heat kernel is non-negative, and monotone in $\O$ we have that
\be\label{a6}
0\le p_\O(x,y;t)\le p_{\R^d}(x,y;t)
=(4\pi t)^{-d/2}e^{-|x-y|^2/(4t)}.
\ee
It follows by either \eqref{a6} or by the maximum principle that
$$0\le u_\O(x;t)\le 1,$$
and that if $|\O|<\infty$ then
\be\label{a8}
0\le Q_\O(t)\le |\O|.
\ee
In the latter situation we also have an eigenfunction expansion for the Dirichlet heat kernel in terms of the Dirichlet eigenvalues $\lambda_1(\O)\le\lambda_2(\O)\le\cdots$, and a corresponding orthonormal set of eigenfunctions $\{\varphi_1,\varphi_2,\cdots\}$,
$$p_\O(x,y;t)=\sum_{j=1}^{\infty}e^{-t\lambda_j(\O)}\varphi_j(x)\varphi_j(y).$$
We note that the eigenfunctions are in $L^p(\O)$ for all $1\le p\le\infty$. It follows by Parseval's formula that
\be\label{a10}
Q_\O(t)=\sum_{j=1}^{\infty}e^{-t\lambda_j(\O)}
\left(\int_\O\varphi_j\,dx\right)^2\le
e^{-t\lambda_1(\O)}\sum_{j=1}^{\infty}
\left(\int_\O\varphi_j\,dx\right)^2=e^{-t\lambda_1(\O)}|\O|.
\ee
Since the torsion function is given by
$$w_\O(x)=\int_0^\infty u_\O(x;t)\,dt,$$
we have that
$$T(\O)=\sum_{j=1}^{\infty}\lambda_j(\O)^{-1}
\left(\int_\O\varphi_j\,dx\right)^2.$$
We recover Proposition 2.3. by integrating \eqref{a10} with respect to $t$ over $[0,\infty)$:
$$T(\O)\le\lambda_1(\O)^{-1}\sum_{j=1}^{\infty}
\left(\int_\O\varphi_j\,dx\right)^2=\lambda_1(\O)^{-1}|\O|.$$

Let $M_1$ and $M_2$ be two open sets in Euclidean space with finite Lebesgue measures $|M_1|$ and $|M_2|$ respectively. Let $M=M_1\times M_2$. We have that
$$p_{M_1\times M_2}(x,y;t)=p_{M_1}(x_1,y_1;t)p_{M_2}(x_2,y_2;t),$$
where $x=(x_1,x_2), y=(y_1,y_2)$. It follows that
\be\label{a15}
Q_M(t)=Q_{M_1}(t)Q_{M_2}(t),
\ee
and
\be\label{a151}
T(M)=\int_0^{\infty}Q_{M_1}(t)Q_{M_2}(t)\,dt.
\ee
Integrating \eqref{a15} with respect to $t$, and using \eqref{a8} for $M_2$ we obtain that
\be\label{a16}
T(M)\le T(M_1)|M_2|.
\ee

This upper bound should be ``sharp'' if the decay of $Q_{M_2}(t)$ with respect to $t$ is much slower than the decay of $Q_{M_1}(t)$. The result below makes this assertion precise in the case where $M_2$ is a convex set with $\h(\partial M_2)<\infty$. The latter condition is for convex sets equivalent to requiring that $M_2$ is bounded. Here $\h$ denotes the $(d_2-1)$-dimensional Hausdorff measure.

\begin{teo}\label{the2}
Let $M=M_1\times M_2$, where $M_1$ is an arbitrary open set in $\R^{d_1}$ with finite $d_1$-measure and $M_2$ is a bounded convex open set in $\R^{d_2}$. Then there exists a constant $\C_{d_2}$ depending on $d_2$ only such that
\be\label{a17}
T(M)\ge T(M_1)|M_2|-\C_{d_2}\lambda_1(M_1)^{-3/2}|M_1|\h(\partial M_2).
\ee
\end{teo}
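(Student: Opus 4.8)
The plan is to use the product formula \eqref{a151} for $T(M)$ together with the heat-semigroup bound \eqref{a10} for $Q_{M_1}$, and to control $Q_{M_2}(t)$ from below against the trivial bound $|M_2|$. The starting point is the identity
$$T(M)=\int_0^\infty Q_{M_1}(t)\,Q_{M_2}(t)\,dt,$$
which I would rewrite as
$$T(M)=|M_2|\int_0^\infty Q_{M_1}(t)\,dt-\int_0^\infty Q_{M_1}(t)\big(|M_2|-Q_{M_2}(t)\big)\,dt=T(M_1)|M_2|-\int_0^\infty Q_{M_1}(t)\big(|M_2|-Q_{M_2}(t)\big)\,dt.$$
Since $0\le Q_{M_2}(t)\le|M_2|$ by \eqref{a8}, the integrand of the error term is non-negative, so the whole argument reduces to producing an upper bound
$$\int_0^\infty Q_{M_1}(t)\big(|M_2|-Q_{M_2}(t)\big)\,dt\le\C_{d_2}\,\lambda_1(M_1)^{-3/2}\,|M_1|\,\h(\partial M_2).$$

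For the factor involving $M_1$ I would invoke \eqref{a10}, namely $Q_{M_1}(t)\le e^{-t\lambda_1(M_1)}|M_1|$. For the factor involving $M_2$ the key is a "heat loss" estimate for convex bodies: there is a dimensional constant such that
$$|M_2|-Q_{M_2}(t)\le \C'_{d_2}\,\h(\partial M_2)\,t^{1/2},\qquad t>0.$$
This is the content of the classical first-term heat-content asymptotics together with a global (non-asymptotic) one-sided bound valid for all $t$; for convex $M_2$ one gets it cleanly, e.g.\ by the parabolic maximum principle comparing $u_{M_2}$ from below with the one-dimensional half-space solution $\mathrm{erf}\big(\mathrm{dist}(x,\partial M_2)/(2\sqrt t)\big)$ and integrating the resulting defect over a tubular neighbourhood of $\partial M_2$, using that for convex sets the inner-parallel-set surface areas do not exceed $\h(\partial M_2)$. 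Combining the two bounds,
$$\int_0^\infty Q_{M_1}(t)\big(|M_2|-Q_{M_2}(t)\big)\,dt\le \C'_{d_2}\,|M_1|\,\h(\partial M_2)\int_0^\infty t^{1/2}e^{-t\lambda_1(M_1)}\,dt=\C'_{d_2}\,\Gamma\!\big(\tfrac32\big)\,|M_1|\,\h(\partial M_2)\,\lambda_1(M_1)^{-3/2},$$
which is exactly \eqref{a17} with $\C_{d_2}=\C'_{d_2}\Gamma(3/2)=\tfrac{\sqrt\pi}{2}\C'_{d_2}$.

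The main obstacle is the convex heat-loss bound $|M_2|-Q_{M_2}(t)\le \C'_{d_2}\h(\partial M_2)t^{1/2}$ holding uniformly in $t$ (not merely as $t\downarrow0$): for large $t$ one must be sure the constant can be taken independent of the geometry of $M_2$ beyond its perimeter. I would handle this by splitting at, say, $t=\mathrm{diam}(M_2)^2$ or by noting that the erf-comparison argument is scale-covariant and the relevant parallel-set inequality $\h(\{x\in M_2:\mathrm{dist}(x,\partial M_2)<s\})\le s\,\h(\partial M_2)$ for convex $M_2$ (a consequence of the Cauchy/Steiner formula and monotonicity of the surface area of inner parallel bodies) is valid for all $s>0$; once $M_2$ is "eaten through" the left side is just $|M_2|\le \diam(M_2)\h(\partial M_2)/2$ anyway, so the bound survives with a possibly larger dimensional constant. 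A secondary technical point is justifying the interchange of summation/integration and the finiteness of $T(M_1)=\int_0^\infty Q_{M_1}(t)\,dt$; but this is guaranteed by \eqref{a10} since $\lambda_1(M_1)>0$ when $|M_1|<\infty$, which also makes $\lambda_1(M_1)^{-3/2}$ meaningful.
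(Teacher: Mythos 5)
Your decomposition
$$T(M)=T(M_1)|M_2|-\int_0^\infty Q_{M_1}(t)\bigl(|M_2|-Q_{M_2}(t)\bigr)\,dt,$$
the use of $Q_{M_1}(t)\le e^{-t\lambda_1(M_1)}|M_1|$, the reduction to a heat-loss bound $|M_2|-Q_{M_2}(t)\le\C'_{d_2}\h(\partial M_2)t^{1/2}$, the coarea argument with the inner-parallel-set inequality $\h(\partial M_2(r))\le\h(\partial M_2)$, and the final $\Gamma(3/2)\lambda_1(M_1)^{-3/2}$ are exactly the route the paper takes. So the architecture is right.

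The one genuine gap is the direction of your pointwise comparison. You assert $u_{M_2}(x;t)\ge\mathrm{erf}\bigl(d(x)/(2\sqrt t)\bigr)$ "by the parabolic maximum principle comparing from below with the half-space solution". This inequality is \emph{false}: for a bounded convex $M_2$, every point $x\in M_2$ lies inside the supporting half-space $H$ at its nearest boundary point, with $M_2\subset H$, so domain monotonicity of the Dirichlet heat semigroup gives $u_{M_2}(x;t)\le u_H(x;t)=\mathrm{erf}\bigl(d(x)/(2\sqrt t)\bigr)$ -- the erf solution is an \emph{upper} bound, not a lower bound. (One also sees the failure directly: as $t\to\infty$, $u_{M_2}$ decays exponentially while $\mathrm{erf}\bigl(d(x)/(2\sqrt t)\bigr)$ decays only like $t^{-1/2}$.) The correct lower bound is the one the paper imports as Lemma \ref{lem3} from \cite{vdB8}, namely
$$u_\O(x;t)\ge 1-2\int_{\{|y-x|>d(x)\}}p_{\R^d}(x,y;t)\,dy,$$
valid for \emph{any} open set, with the crucial factor $2$ coming from a reflection-type estimate on the exit probability from the ball $B(x,d(x))$ rather than from a half-space comparison. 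In one dimension this reads $u_\O(x;t)\ge 2\,\mathrm{erf}\bigl(d(x)/(2\sqrt t)\bigr)-1$, which is strictly weaker than your claimed bound but still of the right order $1-O\bigl(e^{-d(x)^2/(4t)}\bigr)$, and it plugs into your coarea/parallel-set step to yield $|M_2|-Q_{M_2}(t)\le\C'_{d_2}\h(\partial M_2)\,t^{1/2}$ uniformly in $t>0$. With that replacement, no splitting at $t=\diam(M_2)^2$ is needed, and your computation closes with $\C_{d_2}=\pi^{1/2}d_2\,\Gamma\bigl((d_2+1)/2\bigr)/\Gamma\bigl((d_2+2)/2\bigr)$, matching the paper.
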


For the proof of Theorem \ref{the2} we need the following lemma (proved as Lemma 6.3 in \cite{vdB8}).

\begin{lemma}\label{lem3}
For any open set $\O$ in $\R^d$,
\be\label{a19}
u_\O(x;t)\ge1-2\int_{\{y\in
\R^d:|y-x|>d(x)\}}p_{\R^d}(x,y;t)\,dy,
\ee
where
$$d(x)=\min\{|x-z|\ :\ z\in\partial\O\}.$$
\end{lemma}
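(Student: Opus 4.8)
\textbf{Proof proposal for Lemma \ref{lem3}.}

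The plan is to exploit the domain monotonicity of the Dirichlet heat kernel together with the explicit Gaussian kernel on all of $\R^d$. Fix $x\in\O$ and write $\rho=d(x)$, so that the ball $B(x,\rho)$ is contained in $\O$. By monotonicity of the Dirichlet heat kernel (\eqref{a6} used for the pair $B(x,\rho)\subset\O$), we have $p_\O(y,z;t)\ge p_{B(x,\rho)}(y,z;t)$ for $y,z\in B(x,\rho)$, hence
$$u_\O(x;t)=\int_\O p_\O(x,y;t)\,dy\ge\int_{B(x,\rho)}p_{B(x,\rho)}(x,y;t)\,dy=u_{B(x,\rho)}(x;t).$$
So it suffices to prove the asserted lower bound for $u_{B(x,\rho)}(x;t)$. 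I would do this by writing $u_{B(x,\rho)}(x;t)=1-v(x;t)$, where $v$ is the solution of the heat equation in $B(x,\rho)$ with zero initial data and boundary data $1$, i.e. $v$ is the ``complement'' temperature. Equivalently, $v(x;t)=\mathbb P_x\big(\tau_{B(x,\rho)}\le t\big)$, the probability that Brownian motion started at $x$ (with the analyst's normalization matching $\Delta$, not $\frac12\Delta$) exits the ball $B(x,\rho)$ before time $t$.

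The next step is to compare this exit probability with the corresponding free-space quantity. By the reflection principle / a standard stochastic comparison, the probability that Brownian motion leaves the ball $B(x,\rho)$ before time $t$ is at most twice the probability that it is, at time $t$, outside the concentric ball, plus a correction — more precisely, one uses that $\{\tau\le t\}\subset\{\tau\le t,\ |X_t-x|>\rho\}\cup\{\tau\le t,\ |X_t-x|\le\rho\}$ and that on the second event the strong Markov property at time $\tau$ together with the symmetry of the Gaussian gives $\mathbb P_x(\tau\le t,\ |X_t-x|\le\rho)\le \mathbb P_x(\tau\le t,\ |X_t-x|>\rho)$, since from a boundary point of the ball the endpoint is equally likely to fall inside or outside a half-space tangent to the sphere, and the ball is contained in that half-space. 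This yields
$$v(x;t)=\mathbb P_x(\tau\le t)\le 2\,\mathbb P_x\big(|X_t-x|>\rho\big)=2\int_{\{|y-x|>\rho\}}p_{\R^d}(x,y;t)\,dy,$$
which is exactly \eqref{a19} after recalling $\rho=d(x)$ and $p_{\R^d}(x,y;t)=(4\pi t)^{-d/2}e^{-|x-y|^2/(4t)}$.

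The step I expect to be the main obstacle is making the ``factor $2$'' comparison rigorous purely by PDE means, without invoking probability, since the paper is analytic in flavour. A clean analytic route is: let $h(y;t)$ solve the heat equation in the half-space $H=\{y:(y-x)\cdot e>0\}$ (where $e$ is any fixed unit vector) with initial data $\ind_{\{|y-x|\le\rho\}}$ — no, more directly, observe that $v\le \tilde v$ where $\tilde v$ solves the heat equation in $B(x,\rho)$ with boundary value $1$ but that one can bound $\tilde v(x;t)$ by a sum over the $2d$ (or suitably many) tangent half-spaces using the maximum principle: $1-u_{B(x,\rho)}\le \sum 1_{H_i\text{-complement solutions}}$, each of which is an explicit error-function in one variable. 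Comparing the integral of these one-dimensional Gaussians against the radial tail integral $\int_{\{|y-x|>\rho\}}p_{\R^d}$ then produces the constant $2$ (indeed the half-space computation gives precisely the factor $2$ when one notes $B(x,\rho)\subset H_i$ for the tangent half-space and uses translation invariance of $p_{\R^d}$). I would reference Lemma 6.3 of \cite{vdB8} for the precise form, and present the half-space maximum-principle comparison as the self-contained argument; the routine verification that the resulting Gaussian integrals assemble into the right-hand side of \eqref{a19} I would leave to the reader.
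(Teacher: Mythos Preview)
The paper does not prove this lemma: it simply cites Lemma~6.3 of \cite{vdB8}. Your probabilistic argument---reduce to the inscribed ball $B(x,\rho)$ by domain monotonicity, interpret $1-u_{B(x,\rho)}(x;t)$ as the exit probability $\mathbb P_x(\tau\le t)$, and then use the strong Markov property at $\tau$ together with the half-space symmetry of the Gaussian at the exit point to get $\mathbb P_x(\tau\le t)\le 2\,\mathbb P_x(|X_t-x|>\rho)$---is correct and is in fact the argument given in \cite{vdB8}. So the first part of your proposal is both sound and aligned with the cited source.

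Your attempted ``purely analytic'' alternative, however, does not work as written. Bounding $1-u_{B(x,\rho)}$ by a sum of complementary half-space solutions over $2d$ (or any finite family of) tangent half-spaces via the maximum principle would produce a constant proportional to the number of half-spaces, not the sharp factor~$2$; and there is no way to assemble such one-dimensional Gaussian tails into the single radial integral $\int_{\{|y-x|>\rho\}}p_{\R^d}(x,y;t)\,dy$ with constant~$2$. The factor~$2$ genuinely comes from the \emph{single} tangent half-space at the (random) exit point, which is why the strong Markov step is the natural mechanism. If you want an analytic formulation, the clean route is to write $1-u_{B(x,\rho)}(x;t)=\int_0^t\int_{\partial B(x,\rho)}\partial_n p_{B(x,\rho)}(x,z;s)\,d\HH^{d-1}(z)\,ds$ and compare, for each fixed boundary point $z$, with the tangent half-space at $z$; this is the PDE transcription of the probabilistic argument and recovers the constant~$2$ directly. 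I would drop the muddled paragraph and either present the probabilistic proof as-is or give this parabolic-representation version.
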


\begin{proof}[Proof of Theorem \ref{the2}]
With the notation above we have that
\begin{align*}
T(M)&=T(M_1)|M_2|-\int_0^\infty Q_{M_1}(t)(|M_2|-Q_{M_2}(t))\,dt\nonumber\\
&=T(M_1)|M_2|-\int_0^\infty Q_{M_1}(t)\int_{M_2}(1-u_{M_2}(x_2;t))\,dx_2\,dt.
\end{align*}
Define for $r>0$,
$$\partial M_2(r)=\{x\in M_2:d(x)=r\}.$$
It is well known that (Proposition 2.4.3 in \cite{bubu05}) if $M_2$ is convex then
\be\label{a22}
\h(\partial M_2(r))\le\h(\partial M_2).
\ee
By \eqref{a10}, \eqref{a19} and \eqref{a22} we obtain that
\begin{align}\label{a23}
\int_0^{\infty}&Q_{M_1}(t)\int_{M_2}(1-u_{M_2}(x_2;t))\,dx_2\,dt\nonumber\\
&\le2|M_1|\h(\partial M_2)\int_0^{\infty}dt\,
e^{-t\lambda_1(M_1)}\int_0^{\infty}dr\int_{\{z\in
\R^{d_2}:|z-x|>r\}}p_{\R^{d_2}}(x,z;t)\,dz\nonumber\\
&=2d_2\omega_{d_2}|M_1|\h(\partial M_2)\int_0^{\infty}dt\,
e^{-t\lambda_1(M_1)}(4\pi t)^{-d_2/2}\int_0^{\infty}dr\,r^{d_2}e^{-r^2/(4t)}\nonumber\\
&=\C_{d_2}\lambda_1(M_1)^{-3/2}|M_1|\h(\partial M_2),
\end{align}
where
$$\C_{d_2}=\frac{\pi^{1/2}d_2\Gamma((d_2+1)/2)}{\Gamma((d_2+2)/2)}.$$
This concludes the proof.
\end{proof}

\begin{proof}[Proof of Proposition \ref{prop}]
Let $M_1=(0,\epsilon)\subset\R$, $M_2=\omega\subset\R^{d-1}$. Since the
torsion function for $M_1$ is given by $x(\epsilon-x)/2,\ 0\le
x\le \epsilon$ we have that $T(M_1)=\epsilon^3/{12}$. Then
\eqref{a16} proves the upper bound. The lower bound follows from
\eqref{a17} since $\lambda_1(M_1)=\pi^2/{\epsilon}^2$, $|M_1|=\epsilon$.
\end{proof}

It is of course possible, using the Faber-Krahn inequality for $\lambda_1(M_1)$, to obtain a bound for the right-hand side of \eqref{a23} in terms of $|M_1|^{(d_1+3)/{d_1}}\h(\partial M_2)$.

Our next result is an improvement of Proposition \ref{rectangle}. The torsional rigidity for a rectangle follows by substituting the formulae for $Q_{(0,a)}(t)$ and $Q_{(0,b)}(t)$ given in
\eqref{a27} below into \eqref{a151}. We recover the expression given on p.108 in \cite{PS}:
$$T(R_{a,b})=\frac{64ab}{\pi^6}\sum_{k=1,3,\cdots}\sum_{l=1,3,\cdots}
k^{-2}l^{-2}\left(\frac{k^2}{a^2}+\frac{l^2}{b^2}\right)^{-1}.$$
Nevertheless the following result is not immediately obvious.

\begin{teo}\label{the3}
\be\label{a25}
\left|T(R_{a,b})-\frac{a^3b}{12}+\frac{31\zeta(5)a^4}{2\pi^5}\right|
\le\frac{a^5}{15b},
\ee
where
$$\zeta(5)=\sum_{k=1}^{\infty}\frac{1}{k^5}.$$
\end{teo}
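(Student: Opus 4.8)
The plan is to use the product formula \eqref{a151} together with the exact series representation of the one–dimensional heat content, and then to estimate the resulting double sum very carefully so as to keep an error of order $a^5/b$ rather than the cruder $O(a^4)$ from Proposition \ref{rectangle}. First I would record the spectral formula for the heat content of the interval $(0,c)$,
\be\label{a27}
Q_{(0,c)}(t)=\frac{8c}{\pi^2}\sum_{k=1,3,5,\dots}\frac{1}{k^2}\,e^{-\pi^2k^2t/c^2},
\ee
which follows from the eigenfunction expansion (the even modes integrate to zero). Substituting into $T(R_{a,b})=\int_0^\infty Q_{(0,a)}(t)Q_{(0,b)}(t)\,dt$ and integrating term by term gives the double series already quoted on p.~108 of \cite{PS}; but instead of symmetrizing I would integrate out the $b$–variable only partially. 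Concretely, $\int_0^\infty Q_{(0,b)}(t)\,e^{-\pi^2k^2t/a^2}\,dt$ is an explicit elementary function of $a,b,k$, so $T(R_{a,b})$ becomes a single sum over odd $k$ whose $k$-th term has a closed form.

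The main computational step is then to expand that single sum in powers of $a/b$. Writing $s=\pi^2 k^2/a^2$, the $b$–integral $\int_0^\infty Q_{(0,b)}(t)e^{-st}\,dt=\sum_{l\ \mathrm{odd}}\frac{8b}{\pi^2 l^2}\big(s+\pi^2l^2/b^2\big)^{-1}$ should be rewritten, via $\big(s+\pi^2l^2/b^2\big)^{-1}=s^{-1}-\pi^2l^2 b^{-2}s^{-1}\big(s+\pi^2l^2/b^2\big)^{-1}$, as a leading term proportional to $b/s$ (i.e.\ the ``$T(M_1)|M_2|$'' contribution $\tfrac{a^3b}{12}$ after summing over $k$, using $\sum_{k\ \mathrm{odd}}k^{-4}=\tfrac{\pi^4}{96}$), plus a correction. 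Carrying the expansion one more order produces the term $\frac{31\zeta(5)a^4}{2\pi^5}$: here the constant $\frac{31}{32}$ arises because $\sum_{k\ \mathrm{odd}}k^{-5}=(1-2^{-5})\zeta(5)=\tfrac{31}{32}\zeta(5)$, and the $\sum_{l\ \mathrm{odd}}l^{-2}=\tfrac{\pi^2}{8}$ and a further elementary factor supply the remaining constants. The residual, after subtracting these two explicit terms, is a positive double series over odd $k,l$ of the shape $\mathrm{const}\cdot a^4\sum_{k,l}\frac{l^2/b^2}{k^4\,l^2\,(k^2/a^2+l^2/b^2)}$ (roughly), and I would bound it by pulling the smallest factor $k^2/a^2+l^2/b^2\ge k^2/a^2$ down in the denominator and summing the resulting geometric-type series in $k$ and $l$, which yields a clean majorant of the form $\mathrm{const}\cdot a^5/b$; the constant $1/15$ is then a matter of optimizing the elementary bounds (using $\sum_{k\ \mathrm{odd}}k^{-6}=\tfrac{\pi^6}{960}$, $\sum_{l\ \mathrm{odd}}1=\infty$ must be avoided, so the $l$-sum has to be arranged to converge, i.e.\ one keeps one power of $(k^2/a^2+l^2/b^2)^{-1}$ and uses $\sum_l l^{-2}$).

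The delicate point — and the step I expect to be the main obstacle — is the bookkeeping that separates the expansion cleanly into exactly two explicit terms plus a remainder that is genuinely $O(a^5/b)$ and not merely $O(a^4)$: one must verify that the would-be $O(a^4)$ remainder after removing the $\zeta(5)$ term actually cancels (it does, because the next term in the geometric expansion of $(s+\pi^2l^2/b^2)^{-1}$ contributes at order $a^6/b^2$, which is smaller), and one must check the sign so that the absolute value in \eqref{a25} is justified by the remainder being a manifestly positive series bounded above by $a^5/(15b)$. A secondary technical nuisance is justifying term-by-term integration and rearrangement of the double series, which is immediate from absolute convergence (all terms positive, total equal to the finite quantity $T(R_{a,b})$) so this is routine. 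Once the remainder is isolated, the bound $a^5/(15b)$ follows from the inequality $\big(k^2/a^2+l^2/b^2\big)^{-1}\le a^2 k^{-2}$ together with $\sum_{k\ \mathrm{odd}}k^{-6}=\pi^6/960$ and $\sum_{l\ \mathrm{odd}}l^{-2}=\pi^2/8$, whose product with the accumulated constants gives precisely $1/15$.
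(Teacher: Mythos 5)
Your starting point is the same as the paper's: the spectral formula $Q_{(0,c)}(t)=\frac{8c}{\pi^2}\sum_{k\ \mathrm{odd}}k^{-2}e^{-\pi^2k^2t/c^2}$ together with $T(R_{a,b})=\int_0^\infty Q_{(0,a)}(t)Q_{(0,b)}(t)\,dt$, and you correctly identify $\sum_{k\ \mathrm{odd}}k^{-4}=\pi^4/96$, $\sum_{k\ \mathrm{odd}}k^{-5}=\frac{31}{32}\zeta(5)$, $\sum_{k\ \mathrm{odd}}k^{-6}=\pi^6/960$ as the constants that must eventually appear. But the mechanism you propose for isolating the $\zeta(5)$ term does not work, and this is a genuine gap, not a bookkeeping issue.

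Concretely, you propose to expand $(s+\pi^2l^2/b^2)^{-1}$ as a geometric series in $\pi^2l^2/(b^2s)$ and sum term by term. Already at the second order this gives $\sum_{l\ \mathrm{odd}}\frac{8b}{\pi^2 l^2}\cdot\frac{\pi^2 l^2}{b^2 s^2}=\frac{8}{bs^2}\sum_{l\ \mathrm{odd}}1=\infty$: the $l^{-2}$ from the heat-content prefactor cancels the $l^2$ from the expansion, so the $l$-sum diverges and the "next term is $O(a^6/b^2)$" claim fails. You notice this (``$\sum_{l\ \mathrm{odd}}1=\infty$ must be avoided''), but the fix you sketch (``keep one power of $(k^2/a^2+l^2/b^2)^{-1}$ and use $\sum_l l^{-2}$'') does not produce the $\zeta(5)$ term at all. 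Indeed the $\zeta(5)$ coefficient is $\frac{16}{\pi^5}\sum_{k\ \mathrm{odd}}k^{-5}$ with no factor of $\sum_l l^{-2}=\pi^2/8$ in it; it arises from the \emph{sum-to-integral} approximation $\sum_{l\ \mathrm{odd}}(k^2/a^2+l^2/b^2)^{-1}\approx\frac{\pi ab}{4k}$, which is an Euler--Maclaurin/Poisson-summation fact, not a geometric-series fact. Your account of where $\frac{31\zeta(5)a^4}{2\pi^5}$ comes from is therefore incorrect.

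The ingredient you are missing is the short-time heat-content expansion for the long interval. The paper writes $Q_{(0,b)}(t)=b-\frac{4t^{1/2}}{\pi^{1/2}}+R(t)$ and proves, via the Poisson summation formula $\sum_{k\in\mathbb{Z}}e^{-t\pi k^2}=t^{-1/2}\sum_{k\in\mathbb{Z}}e^{-\pi k^2/t}$, that $|R(t)|\le 8t/b$. Feeding $b$ into the product formula produces the $\frac{a^3b}{12}$ term, $-\frac{4t^{1/2}}{\pi^{1/2}}$ produces exactly $-\frac{31\zeta(5)a^4}{2\pi^5}$, and the $8t/b$ bound on $R$ integrates against $Q_{(0,a)}$ to $\frac{64a^5}{\pi^6 b}\sum_{k\ \mathrm{odd}}k^{-6}=\frac{a^5}{15b}$. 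Your single constant $\sum_k k^{-6}=\pi^6/960$ is indeed what gives $1/15$, but it must be paired with the $8t/b$ estimate on the heat-content remainder, not with $\sum_l l^{-2}$. Without the Poisson-summation control of $Q_{(0,b)}(t)-b+\frac{4t^{1/2}}{\pi^{1/2}}$ (or an equivalent Euler--Maclaurin bound on $\sum_{l\ \mathrm{odd}}(k^2/a^2+l^2/b^2)^{-1}-\frac{\pi ab}{4k}$), the decomposition into two explicit terms plus an $O(a^5/b)$ remainder cannot be justified.
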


\begin{proof}
A straightforward computation using the eigenvalues and eigenfunctions of the Dirichlet Laplacian on the interval together with the first identity in \eqref{a10} shows that
\be\label{a27}
Q_{(0,a)}(t)=\frac{8a}{\pi^2}\sum_{k=1,3,\dots}k^{-2}e^{-t\pi^2k^2/a^2}.
\ee
We write
\be\label{a28}
Q_{(0,b)}(t)=b-\frac{4t^{1/2}}{\pi^{1/2}}+\left(Q_{(0,b)}(t)+\frac{4t^{1/2}}{\pi^{1/2}}-b\right).
\ee
The constant term $b$ in the right-hand side of \eqref{a28} gives, using \eqref{a27}, a contribution
\begin{align*}
\frac{8ab}{\pi^2}&\int_{[0,\infty)}dt\sum_{k=1,3,\dots}k^{-2}e^{-t\pi^2k^2/a^2}=\frac{8a^3b}{\pi^4}\sum_{k=1,3,\dots}k^{-4}\nonumber
\\&
=\frac{8a^3b}{\pi^4}\left(\sum_{k=1}^{\infty}k^{-4}-\sum_{k=2,4,\dots}k^{-4}\right)
=\frac{15a^3b}{2\pi^4}\zeta(4)\nonumber \\ &= \frac{a^3b}{12},
\end{align*}
which jibes with the corresponding term in \eqref{a25}. In a very similar calculation we have that the $-\frac{4t^{1/2}}{\pi^{1/2}}$ term in the right-hand side of \eqref{a28} contributes
\begin{align*}
-\frac{32a}{\pi^{5/2}}\int_{[0,\infty)}dt\,t^{1/2}\sum_{k=1,3,\dots}k^{-2}e^{-t\pi^2k^2/a^2}=-\frac{31\zeta(5)a^4}{2\pi^5},
\end{align*}
which jibes with the corresponding term in \eqref{a25}. It remains to bound the contribution from the expression in the large round brackets in \eqref{a25}. Applying formula \eqref{a27} to the interval $(0,b)$ instead and using the fact that $\sum_{k=1,3,\cdots}k^{-2}=\pi^2/8$ gives that
\begin{align}\label{a31}
Q_{(0,b)}(t)-b+\frac{4t^{1/2}}{\pi^{1/2}}&=\frac{8b}{\pi^2}\sum_{k=1,3,\dots}k^{-2}\left(e^{-t\pi^2k^2/b^2}-1\right)+\frac{4t^{1/2}}{\pi^{1/2}}\nonumber\\&
=-\frac{8}{b}\sum_{k=1,3,\dots}\int_{[0,t]}d\tau
e^{-\tau\pi^2k^2/b^2}+\frac{4t^{1/2}}{\pi^{1/2}}\nonumber \\ &
=-\frac{8}{b}\int_{[0,t]}d\tau\left(\sum_{k=1}^{\infty}e^{-\tau\pi^2k^2/b^2}-\sum_{k=1}^{\infty}e^{-4\tau\pi^2k^2/b^2}\right)+\frac{4t^{1/2}}{\pi^{1/2}}.
\end{align}
In order to bound the right-hand side of \eqref{a31} we use the following instance of the Poisson summation formula.
$$\sum_{k\in \mathbb{Z}}e^{-t\pi k^2}=t^{-1/2}\sum_{k\in
\mathbb{Z}}e^{-\pi k^2/t}, \ t>0.$$
We obtain that
$$\sum_{k=1}^{\infty}e^{-t\pi
k^2}=\frac{1}{(4t)^{1/2}}-\frac{1}{2}+t^{-1/2}\sum_{k=1}^{\infty}e^{-\pi
k^2/t},\ t>0.$$
Applying this identity twice (with $t=\pi \tau/b^2$ and $t=4\pi\tau/b^2$ respectively) gives that the right-hand side of \eqref{a31} equals
\begin{align*}
-\frac{8}{\pi^{1/2}}\int_{[0,t]}d\tau
\left(\tau^{-1/2}\sum_{k=1}^{\infty}e^{-k^2b^2/{\tau}}-(4\tau)^{-1/2}\sum_{k=1}^{\infty}e^{-k^2b^2/{(4\tau)}}\right).
\end{align*}
Since $k\mapsto e^{-k^2b^2/{\tau}}$ is non-negative and decreasing,
$$\sum_{k=1}^{\infty}\tau^{-1/2}e^{-k^2b^2/{\tau}}\le\tau^{-1/2}\int_{[0,\infty)}dke^{-k^2b^2/{\tau}}=\pi^{1/2}(2b)^{-1}.$$
It follows that
$$\left|Q_{(0,b)}(t)-b+\frac{4t^{1/2}}{\pi^{1/2}}\right|\le\frac{8t}{b},\ t>0.$$
So the contribution of the third term in \eqref{a28} to
$T(R_{a,b})$ is bounded in absolute value by
\begin{align*}
\frac{64a}{\pi^2b}\int_{[0,\infty)}dt\,t\sum_{k=1,3,\dots}k^{-2}e^{-t\pi^2k^2/a^2}
&=\frac{64a^5}{\pi^6b}\sum_{k=1,3,\dots}k^{-6}\nonumber\\
&=\frac{63a^5}{\pi^6b}\zeta(6)\nonumber\\
&=\frac{a^5}{15b}.
\end{align*}
This completes the proof of Theorem \ref{the3}.
\end{proof}

The Kohler-Jobin theorem mentioned in Section \ref{sposit} generalizes to $d$-dimensions: for any open set $\O$ with finite measure the ball minimizes $T(\O)\lambda_1(\O)^{(d+2)/2}$. Moreover, in the spirit of Theorem \ref{the2}, the following inequality is proved in \cite{vdB7} through an elementary heat equation proof.

\begin{teo}\label{the4}
If $T(\O)<\infty$ then the spectrum of the Dirichlet Laplacian
acting in $L^2(\O)$ is discrete, and
$$T(\O)\ge \left(\frac{2}{d+2}\right)\left(\frac{4\pi
d}{d+2}\right)^{d/2}\sum_{k=1}^{\infty}\lambda_k(\O)^{-(d+2)/2}.$$
\end{teo}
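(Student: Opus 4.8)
The plan is to run a heat-equation argument parallel to the ones used above for $Q_\O$. First I would recall that $w_\O(x)=\int_0^\infty u_\O(x;t)\,dt$ and hence, by Tonelli,
$$T(\O)=\int_\O w_\O\,dx=\int_0^\infty Q_\O(t)\,dt.$$
The key new ingredient is a pointwise lower bound on the on-diagonal heat kernel of the form $p_\O(x,x;t)\le (4\pi t)^{-d/2}$, which follows from \eqref{a6}; feeding this into the eigenfunction expansion $p_\O(x,x;t)=\sum_j e^{-t\lambda_j(\O)}\varphi_j(x)^2$ and integrating over $\O$ gives the trace inequality
$$\sum_{j=1}^\infty e^{-t\lambda_j(\O)}=\int_\O p_\O(x,x;t)\,dx\le (4\pi t)^{-d/2}|\O|.$$
At this stage one already sees that the spectrum is discrete whenever $|\O|<\infty$; if instead we only know $T(\O)<\infty$, the point is that $T(\O)<\infty$ forces $Q_\O(t)\to0$ fast enough (indeed $Q_\O(t)\le T(\O)/t$ is false in general, but $\int_0^\infty Q_\O(t)\,dt<\infty$ with $Q_\O$ nonincreasing does the job) to guarantee $|\O|<\infty$ is not needed — more carefully, $T(\O)<\infty$ implies $u_\O(\cdot;t)\in L^1$ with $Q_\O(t)$ finite and summable, from which compactness of the resolvent and discreteness of the spectrum follow by a standard argument (this is exactly the content cited as Theorem \ref{the4} being ``proved in \cite{vdB7}'', so I would simply invoke it or reprove it via the semigroup being trace class).

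The second and main step is the comparison that produces the explicit constant. Instead of the trivial bound $Q_\O(t)\le e^{-t\lambda_1(\O)}|\O|$ used to recover Proposition \ref{ineq1}, I would split the time integral and optimize. Write $T(\O)=\int_0^\infty Q_\O(t)\,dt$ and, using $Q_\O(t)=\sum_j e^{-t\lambda_j(\O)}\big(\int_\O\varphi_j\big)^2$, integrate termwise to get $T(\O)=\sum_j \lambda_j(\O)^{-1}\big(\int_\O\varphi_j\big)^2$. This is not yet $\sum_j\lambda_j^{-(d+2)/2}$, so the trick must instead bound $\big(\int_\O\varphi_j\big)^2$ from below. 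By Cauchy–Schwarz that is hopeless directly; the right move is to relate $\big(\int_\O\varphi_j\big)^2$ to $\|\varphi_j\|_\infty$ via the ultracontractivity estimate $\|\varphi_j\|_\infty\le (4\pi)^{-d/4}e^{1/2}\lambda_j(\O)^{d/4}$ (obtained from $e^{-t\lambda_j}\|\varphi_j\|_\infty^2=\|e^{t\Delta_\O}\varphi_j\|_\infty^2\le \|p_\O(\cdot,\cdot;t)\|_{1\to\infty}\le(4\pi t)^{-d/2}$ at $t=1/\lambda_j$). Hmm — but that bounds $\|\varphi_j\|_\infty$ from \emph{above}, which is the wrong direction for bounding $\int_\O\varphi_j$ from below.

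The cleaner route, and the one I expect works, is to go back to $Q_\O(t)=\int_\O u_\O(x;t)\,dx$ and use the elementary pointwise bound $u_\O(x;t)\ge \sum_j e^{-t\lambda_j(\O)}\varphi_j(x)\int_\O\varphi_j$ is an equality, not an inequality; so instead compare $Q_\O(t)$ with the heat content of $\R^d$-balls of the same ``heat capacity''. Concretely: for each fixed $t$, among all open sets with a given value of $\int_\O p_\O(x,x;t)\,dx$ (the trace), the quantity $Q_\O(t)$ is... this is getting complicated. I will therefore follow the actual strategy of \cite{vdB7}: use $T(\O)=\int_0^\infty Q_\O(t)\,dt$ together with the two-sided control $\sum_j e^{-t\lambda_j}\le(4\pi t)^{-d/2}|\O|$ and the identity $T(\O)=\sum_j\lambda_j^{-1}\big(\int\varphi_j\big)^2$, combined with $\big(\int_\O\varphi_j\big)^2\le |\O|$ (from $\|\varphi_j\|_2=1$), and optimize the resulting Gaussian integral $\int_0^\infty e^{-t\lambda_j}t^{-d/2}\,dt$-type expression that appears once one bounds $Q_\O(t)\le \min\{(4\pi t)^{-d/2}|\O|\cdot(\text{something}),\ \text{something else}\}$. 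The constant $\big(\tfrac{2}{d+2}\big)\big(\tfrac{4\pi d}{d+2}\big)^{d/2}$ is precisely what pops out of a one-parameter optimization of the form $\inf_{s>0}\big(\int_0^s(4\pi t)^{-d/2}\,dt\cdot A + \text{tail}\big)$ after matching powers, so the final step is a calculus exercise: differentiate in $s$, find $s=s(\lambda_j)$, and sum over $j$.

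The main obstacle I anticipate is pinning down the correct intermediate inequality for $Q_\O(t)$ that is simultaneously (i) valid for \emph{all} open sets with $T(\O)<\infty$, not just those of finite measure, and (ii) sharp enough that the one-dimensional optimization reproduces the exact constant $\big(\tfrac{2}{d+2}\big)\big(\tfrac{4\pi d}{d+2}\big)^{d/2}$ rather than a lossy version of it. Bridging from ``finite torsion'' to the trace-class / discreteness statement without secretly assuming $|\O|<\infty$ is the delicate point; once that is in place, the rest is the heat-semigroup bookkeeping sketched above, and I would cite \cite{vdB7} for the details while indicating these main steps.
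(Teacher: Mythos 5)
The paper never actually proves Theorem \ref{the4}; it introduces it with ``the following inequality is proved in \cite{vdB7} through an elementary heat equation proof'' and moves on. So there is no in-paper argument to compare against, and on that score your decision to ultimately defer to \cite{vdB7} is exactly what the paper itself does.

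That said, your exploratory sketch contains several dead ends that you correctly flag (the ultracontractivity bound on $\|\varphi_j\|_\infty$ goes the wrong way; comparing $Q_\O(t)$ to the heat content of balls is not the right reduction), but it narrowly misses the clean mechanism. The ingredient you want is not a bound on $\big(\int_\O\varphi_j\big)^2$ but the Chapman--Kolmogorov identity $p_\O(x,x;2s)=\int_\O p_\O(x,y;s)^2\,dy$ combined with the Gaussian bound $p_\O(x,y;s)\le(4\pi s)^{-d/2}$ from \eqref{a6}. These give at once the pointwise inequality $u_\O(x;s)=\int_\O p_\O(x,y;s)\,dy\ge(4\pi s)^{d/2}\int_\O p_\O(x,y;s)^2\,dy=(4\pi s)^{d/2}p_\O(x,x;2s)$, valid for every open $\O$, every $x\in\O$ and every $s>0$, with no finiteness of $|\O|$ needed. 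Integrating over $s\in(0,\infty)$ and then over $x\in\O$, and using $\int_0^\infty\tau^{d/2}e^{-\lambda\tau}\,d\tau=\Gamma\bigl(\tfrac{d+2}{2}\bigr)\lambda^{-(d+2)/2}$, yields
$$T(\O)=\int_\O\int_0^\infty u_\O(x;s)\,ds\,dx\ \ge\ \frac{(2\pi)^{d/2}}{2}\,\Gamma\!\left(\frac{d+2}{2}\right)\sum_{k\ge1}\lambda_k(\O)^{-(d+2)/2},$$
which implies the stated inequality (and in fact with an at-least-as-good constant for every $d$; the two coincide at $d=2$). The same computation shows, when $T(\O)<\infty$, that $\sum_k\lambda_k^{-(d+2)/2}<\infty$, which forces $\lambda_k\to\infty$ and hence discreteness of the spectrum, closing the loop without ever assuming $|\O|<\infty$.

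One small factual slip worth fixing: you assert that ``$Q_\O(t)\le T(\O)/t$ is false in general,'' but it is in fact true. Since $u_\O(x;\cdot)$ is nonincreasing in $t$ (it is the probability that Brownian motion started at $x$ has not yet exited $\O$), so is $Q_\O$, and therefore $tQ_\O(t)\le\int_0^tQ_\O(s)\,ds\le T(\O)$. This monotonicity is precisely what lets the argument run under the sole hypothesis $T(\O)<\infty$.
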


We obtain, using the Ashbaugh-Benguria theorem (p.86 in \cite{hen06}) for $\lambda_1(\O)/{\lambda_2(\O)}$, that
\be\label{a38}
T(\O)\lambda_1(\O)^{(d+2)/2}\ge\left(\frac{2}{d+2}\right)\left(\frac{4\pi
d}{d+2}\right)^{d/2}\Gamma\left(1+\frac{d}{2}\right)\left(1+\left(\frac{\lambda_1(B)}{\lambda_2(B)}\right)^{(d+2)/2}\right).
\ee
The constant in the right-hand side of \eqref{a38} is for $d=2$ off by a factor $\frac{j_{0,1}^4j_{1,1}^4}{8(j_{0,1}^4+j_{1,1}^4)}\approx 3.62$ if compared with the sharp Kohler-Jobin constant. We also note the missing factor $m^{m/(m+2)}$ in the right-hand side of (57) in \cite{vdB7}.

\bigskip
\ack A large part of this paper was written during a visit of the first two authors at the Isaac Newton Institute for Mathematical Sciences of Cambridge (UK). GB and MvdB gratefully acknowledge the Institute for the excellent working atmosphere provided. The authors also wish to thank Pedro Antunes helpful discussions. The work of GB is part of the project 2010A2TFX2 {\it``Calcolo delle Variazioni''} funded by the Italian Ministry of Research and University.


\bigskip
{\small\noindent
Michiel van den Berg:
School of Mathematics,
University of Bristol\\
University Walk,
Bristol BS8 1TW - UK\\
{\tt mamvdb@bristol.ac.uk}\\
{\tt http://www.maths.bris.ac.uk/~mamvdb/}

\bigskip\noindent
Giuseppe Buttazzo:
Dipartimento di Matematica,
Universit\`a di Pisa\\
Largo B. Pontecorvo 5,
56127 Pisa - ITALY\\
{\tt buttazzo@dm.unipi.it}\\
{\tt http://www.dm.unipi.it/pages/buttazzo/}

\bigskip\noindent
Bozhidar Velichkov:
Dipartimento di Matematica,
Universit\`a di Pisa\\
Largo B. Pontecorvo 5,
56127 Pisa - ITALY\\
{\tt b.velichkov@sns.it}
\end{document}